\documentclass[12pt]{amsart}

\usepackage{comment}
\newcommand{\disp}{\displaystyle}
\newcommand{\nc}{\newcommand}
\nc{\G}{{\Gamma}} \nc{\BC}{{\mathbb C}} \nc{\BQ}{{\mathbb Q}}
\nc{\BR}{{\mathbb R}} \nc{\BZ}{{\mathbb Z}} \nc{\BP}{{\mathbb P}} \nc{\PC}{{\BP_1(\BC)}}
\nc{\BN}{{\mathbb N}} \nc{\BM}{{\mathbb M}}
\nc{\fH}{{\mathbb H}}

\nc{\mat}{{\binom{a\,\ b}{c\,\ d}}}
\nc{\U}{{\mathcal U}}
\nc{\PS}{{\mbox{PSL}_2(\BZ)}} \nc{\SL}{{\mbox{SL}_2(\BZ)}}
\nc{\SR}{{\mbox{SL}_2(\BR)}} \nc{\PR}{{\mbox{PSL}_2(\BR)}}
\nc{\SLC}{{\mbox{SL}_2(\BC)}}

\nc{\GL}{{\mbox{GL}}} \nc{\PQ}{{\mbox{PGL}_2^+(\BQ)}}
\nc{\GR}{{\mbox{GL}_2^+(\BR)}} \nc{\PG}{{\mbox{PGL}_2(\BC)}}
\nc{\GC}{{\mbox{GL}_2(\BC)}}
\nc{\f}{{\mathcal{F}(\fH)}}
\nc{\Cc}{\widehat{\BC}}
\nc{\e}{{E_{\varrho}(\G)}}
\nc{\g}{{\gamma}}
\nc{\vm}{{V_{\varrho}(\G)}}
\nc{\oo}{{\mathcal O}}
\nc{\M}{{\mbox{M}}}
\nc{\om}{{\omega}}
\nc{\Om}{{\Omega}}
\nc{\TX}{{\widetilde{X}}}
\nc{\ol}{\overline}
\nc{\cl}{{\mathcal L}}
\nc{\ce}{{\mathcal E}}
\nc{\la}{{\lambda}}
\nc{\La}{{\Lambda}}

\nc{\cz}{{\mathcal Z}}

\newtheorem{numbered}{}[section]
\newtheorem{thm}[numbered]{Theorem}

\newtheorem{definition}[numbered]{Definition}

\newtheorem{lem}[numbered]{Lemma}
\newtheorem{remark}[numbered]{Remark}
\newtheorem{prop}[numbered]{Proposition}

\setlength{\parskip}{4pt}

\numberwithin{equation}{section}

\newcommand{\thmref}[1]{Theorem~\ref{#1}}
\newcommand{\propref}[1]{Proposition~\ref{#1}}

\newcommand{\lemref}[1]{Lemma~\ref{#1}}

\usepackage{amsmath,amsfonts,amscd,amssymb,amsthm,comment}
\usepackage{booktabs}
\usepackage{color}
\usepackage{tikz-cd}
\usepackage{multirow}
\usepackage{array}
\usepackage{graphicx}

\usepackage{tabularx}
\usepackage{enumitem}
\usepackage{tikz-cd} 
\setitemize{itemsep=.5mm,topsep=1.5mm,parsep=0pt,
partopsep=0pt}

\setlength{\parindent}{0pt}

\begin{document}
	
	\title[]{On level 2 Modular differential equations}
	\author[]{Khalil Besrour} \author[]{Abdellah Sebbar}

	\address{Department of Mathematics and Statistics, University of Ottawa,
		Ottawa Ontario K1N 6N5 Canada}
	\email{kbesr067@uottawa.ca}
	\email{asebbar@uottawa.ca}

\begin{abstract}
In this paper, we explore the modular differential equation $\disp y'' + F(z)y = 0$ on the upper half-plane $\mathbb{H}$, where $F$ is a weight 4 modular form for $\Gamma_0(2)$. Our approach centers on solving the associated Schwarzian equation $\displaystyle \{h, z\} = 2F(z)$, where $\{h, z\}$ represents the Schwarzian derivative of a meromorphic function $h$ on $\mathbb{H}$. We derive conditions under which the solutions to this equation are modular functions for subgroups of the modular group and provide explicit expressions for these solutions in terms of classical modular functions. Key tools in our analysis include the theory of equivariant functions on the upper half-plane and the representation theory of level 2 subgroups of the modular group. The corresponding case where $F$ is a modular form for the full modular group $\mbox{SL}_2(\BZ)$ has been previously addressed in \cite{forum}.
\end{abstract}

\maketitle

\section{Introduction}

In a series of papers \cite{forum,jmaa,ramanujan,rev,ramanujan2}, the authors examined the differential equation
\begin{equation}\label{eqn1}
    y'' + s\, E_4\, y = 0,
\end{equation}
defined over the complex upper half-plane $\fH$. Here, $E_4$ is the weight 4 Eisenstein series and $s$ is a complex parameter.
Their approach involved analyzing a closely related equation:
\begin{equation}\label{eqn2}
     \{h, z\} = s\, E_4,
\end{equation}
where \( \{h, z\} \) represents the Schwarzian derivative of \( h \), defined for a meromorphic function \( h \) on a domain in \( \mathbb{C} \) as follows:
\begin{equation}\label{schwarz}
    \{h, z\} := \left( \frac{h''}{h'} \right)' - \frac{1}{2} \left( \frac{h''}{h'} \right)^2.
\end{equation}
There is a well-defined correspondence between the solutions of \eqref{eqn1} and those of \eqref{eqn2}. Specifically, if \( y_1 \) and \( y_2 \) are linearly independent meromorphic solutions to \eqref{eqn1}, then  \( y_2/y_1 \) is a solution to  \eqref{eqn2}. Conversely, given a solution \( h \) to \eqref{eqn2}, the functions \(y_1= 1/{\sqrt{h'}} \) and \( y_2={h}/{\sqrt{h'}} \) are well-defined meromorphic functions of $\fH$ and form a pair of  linearly independent solutions to \eqref{eqn1}. In other words, solving one equation will lead to the solutions of the other.

In the meantime, if we are given a modular form $F$ of weight 4 for some subgroup $\G$ of the modular group, then for any solution $h$ to  
$$  \{ h, z \} = F,$$
 there exists a two-dimensional representation $\rho : G \to \text{GL}_2(\mathbb{C})$ such that, for every $\alpha \in \G$ and $\tau \in \fH$, we have
\[
    h(\alpha \cdot \tau) = \rho(\alpha) \cdot h(\tau),
\]
where the action on both sides is by linear fractional transformations.
In this case, $h$ is referred to as a $\rho$-equivariant function on $\G$.

When the representation $\rho$ associated with $h$ has a finite image, $h$ is a modular function for some finite-index subgroup of $\G$. In \cite{forum}, the authors considered the case where $\G$ is the full modular group, for which the space of weight 4 modular forms is 1-dimensional generated by  $E_4$, and established the following result:

\begin{thm}\cite{forum}\label{forum1}
    A solution to $\{ h, z\} = s \, E_4$ is a modular function for some finite index subgroup of $\SL$ if and only if $\disp s = 2 \pi^2 \left( \frac{n}{m} \right)^2$, where $n$ and $m$ are relatively prime positive integers, and $2 \leq m \leq 5$. Moreover, when the solution $h$ is a modular function, its invariance group is the principal congruence group $\G(m)$.
\end{thm}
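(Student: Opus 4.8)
The plan is to reduce the statement to a question about the projective equivariance representation attached to a solution, and then to invoke the classification of finite subgroups of $\PG$ through Schwarz's list of algebraic hypergeometric equations. First I would record the dictionary. A meromorphic solution $h$ of $\{h,z\}=sE_4$ exists — take $h=y_2/y_1$ for a fundamental system of the companion linear equation $y''+\tfrac{s}{2}E_4\,y=0$, whose solutions are holomorphic on the simply connected $\fH$ — and for any such $h$ one has $h'=\mathrm{const}/(cy_2+dy_1)^2$, so $h'$ has no zeros on $\fH$ and only even-order poles; in particular $h$ is a local biholomorphism at every point of $\fH$. By the Schwarzian chain rule, together with $\{\g,z\}=0$ for Möbius $\g$ and the weight-$4$ modularity of $E_4$, we get $\{h\circ\g,z\}=\{h,z\}$ for every $\g\in\SL$, hence $h\circ\g=\rho(\g)\circ h$ for a homomorphism $\rho\colon\PS\to\PG$ (the equivariance recalled in the introduction). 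The reformulation I would use is: $h$ is a modular function for some finite index subgroup of $\SL$ if and only if $\rho$ has finite image, and then the invariance group of $h$ is exactly $\ker\rho$.

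I would then extract the parameter constraints from three local analyses. At $z=i$, in a local coordinate $w$ with $w(i)=0$ in which $S$ acts by $w\mapsto-w$, equivariance reads $h(-w)=\rho(S)\cdot h(w)$; if $\rho(S)$ were trivial, $h$ would be locally even, contradicting the fact that $h'$ has no zeros and only even-order poles, so $\rho(S)$ has order exactly $2$. The same argument at $z=e^{2\pi i/3}$, where the order-$3$ elliptic element $ST$ acts by a primitive cube root of unity, shows $\rho(ST)$ has order exactly $3$; in particular $\rho$ is nontrivial, so $h$ is not a modular function for $\SL$ itself, which forces $m\ge2$. At the cusp, since $\{h,z\}\to s$ as $z\to i\infty$, the solution behaves like a Möbius transform of $e^{\sqrt{-2s}\,z}$, so that $h(z+1)\sim e^{\sqrt{-2s}}\,h(z)$ and $\rho(T)$ is conjugate to $u\mapsto e^{\sqrt{-2s}}u$ — provided $\sqrt{-2s}\notin2\pi i\,\BZ$; in the exceptional case $\rho(T)$ is parabolic or, if trivial, forces $\rho\equiv\mathrm{id}$ (since $T$ normally generates $\PS$) against the previous step. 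Finiteness of $\rho$ thus requires $e^{\sqrt{-2s}}$ to be a root of unity with $\sqrt{-2s}\notin2\pi i\,\BZ$, i.e.\ $\sqrt{-2s}=2\pi i\,n/m$ with $n/m$ in lowest terms and $m\ge2$; this gives $s=2\pi^2(n/m)^2$ and $\rho(T)$ of order $m$.

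It remains, among $s=2\pi^2(n/m)^2$ with $m\ge2$, to decide when $\rho$ is finite and to identify $\ker\rho$. Since $T=S(ST)$ in $\PS$ and $\rho(T)$ has order $m$, the image $\langle\rho(S),\rho(ST)\rangle$ is generated by two elements satisfying $x^2=y^3=(xy)^m=1$, hence is a quotient of the $(2,3,m)$ triangle group $\Delta(2,3,m)$; for $2\le m\le5$ this group is finite (dihedral of order $6$, or $A_4$, $S_4$, $A_5$), which proves the "if" direction — $h$ is then a modular function for the finite index group $\ker\rho$. For the "only if" direction I would push the equation down to the modular curve: under $z\mapsto j$, a computation with the Ramanujan identities turns $\{h,z\}=sE_4$ into the Schwarzian of a hypergeometric equation on $\PC\setminus\{0,1728,\infty\}$ with local exponent differences $\tfrac13$, $\tfrac12$, $n/m$ (the term coming from $sE_4$ has only simple poles, so it alters only the exponent difference at the cusp), and $\rho(ST),\rho(S),\rho(T)$ are its local monodromies; by rigidity of rank-$2$ Fuchsian equations with three singular points together with Schwarz's classification, this monodromy is finite only if the third exponent difference reduces to one of denominator at most $5$, i.e.\ $m\le5$. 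Finally, for $2\le m\le5$ the image is all of $\Delta(2,3,m)$ (no proper quotient of it carries elements of all three orders $2,3,m$), and $\Delta(2,3,m)\cong\mathrm{PSL}_2(\BZ/m)$, of order $6,12,24,60$; since $\rho$ kills $T^m$, we have $\G(m)=\langle\!\langle T^m\rangle\!\rangle\subseteq\ker\rho$ (the first equality holding classically precisely for $m\le5$), and comparing indices, $[\PS:\G(m)]=|\mathrm{image}\,\rho|$, gives $\ker\rho=\G(m)$. The local expansion $h\sim q^{n/m}$ and its $\SL$-translates then show $h$ is meromorphic at every cusp of $\G(m)$, so $h$ is a modular function whose invariance group is exactly $\G(m)$.

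The step I expect to be the main obstacle is the borderline value $m=6$, where rigidity is unavailable because the hypergeometric equation above becomes reducible — for $n=1$, $\eta^{-2}$ is a solution of the companion equation, giving an eigenline on which $\rho$ acts through the (finite-order) $\eta$-multiplier. There Schwarz's list cannot be quoted, and one must show directly that the monodromy is nonetheless infinite: the complementary solution is an Eichler integral of $\eta^4$, whose period over $z\mapsto z+1$ is nonzero, so $\rho(ST)$ and $\rho(T)$ fail to share both fixed points; the image therefore lies in a Borel subgroup of $\PG$, i.e.\ the affine group of $\BC$, but is non-abelian and hence infinite, so $h$ is modular for no finite index subgroup. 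One also needs to treat the resonant case $m=1$ at the cusp carefully enough to be certain that whenever $\rho(T)$ is nontrivial it is genuinely parabolic, hence of infinite order.
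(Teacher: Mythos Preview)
The paper does not prove \thmref{forum1}; it is quoted from \cite{forum} as background, so there is no proof in this paper to compare against. Your argument is essentially sound and close in spirit to the method the present paper uses for its $\Gamma_0(2)$ analogue: you extract the exact orders $2,3$ of $\overline{\rho}(S),\overline{\rho}(ST)$ from local univalence at the elliptic fixed points (equivalently, $\ker\overline{\rho}$ is torsion-free, which is the content of \lemref{forum2}), and the order $m$ of $\overline{\rho}(T)$ from the Frobenius indicial analysis at the cusp. The main methodological difference is that for the ``only if'' direction you push down to the $j$-line and invoke hypergeometric rigidity together with Schwarz's list, whereas the paper's strategy (visible in \thmref{groups}) is purely group-theoretic via \propref{klein}: a finite subgroup of $\PG$ generated by elements of exact orders $2$ and $3$ whose product has exact order $m$ is $\Delta(2,3,m)$ for $m\le5$, is cyclic $C_6$ for $m=6$, and does not exist for $m\ge7$ (the exceptional groups $A_4,S_4,A_5$ contain no element of order $\ge6$; in a dihedral image the product of the order-$2$ and order-$3$ generators is forced to have order $2$; in a cyclic image that product has order $6$). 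This bypasses Schwarz entirely and handles all $m\ge7$ without any irreducibility check on the hypergeometric side.

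Your handling of $m=6$ has a gap. You treat only $n=1$, via $\eta^{-2}$ and the Eichler integral of $\eta^4$, and even there the stated reason---nonvanishing of the $T$-period of the integral---is not quite the right invariant: $\overline{\rho}(T)$ is diagonalizable of order $6$ regardless (its eigenvalue ratio $e^{\pi i/3}\ne1$), so what one must actually show is that $\overline{\rho}(S)$ and $\overline{\rho}(T)$ fail to share their \emph{finite} fixed point, i.e.\ that $h(i)\ne\lim_{z\to i\infty}h(z)$. More seriously, $n=5$ is not addressed, and there no closed-form solution like $\eta^{-2}$ is apparent. A cleaner argument, in the Riemann--Hurwitz style of Section~5, disposes of all $n$ coprime to $6$ at once: if the image were $C_6$ then $\ker\overline{\rho}$ would be the commutator subgroup of $\PS$, which has index $6$, a single cusp of width $6$, and genus $1$; a modular function $h$ on this curve with $h'$ nowhere zero on $\fH$ is ramified only at that single cusp, and Riemann--Hurwitz reads $0=2g-2=-2d+(e-1)$ with $e\le d$, which is impossible.
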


An example of such a solution, when $n/m=1/2$, is the Klein $\lambda-$function, which satisfies
$$
\{ \lambda, \tau \} = \frac{\pi^2}{2} E_4 .
$$

In this paper, we focus on the level 2 congruence group $\Gamma_0(2)$ and study the corresponding modular differential equation with coefficients given by modular forms for $\Gamma_0(2)$. Specifically, we aim to solve the Schwarzian differential equation
$\{h,\tau\}=F(\tau)$,
where $F$ is a weight 4 modular form for $\Gamma_0(2)$. The space of such forms, denoted $M_4(\Gamma_0(2))$, is two-dimensional and has as a basis $\{ \theta_2^8 , (\theta_3 \theta_4)^4 \}$, where $\theta_2$, $\theta_3$, and $\theta_4$ are the Jacobi theta functions. The Schwarzian equation then becomes 
\begin{equation}\label{eqn3}  
\{h,\tau\} \,=\, a\, \theta_2^8 + b \, (\theta_3 \theta_4)^4, \
\end{equation} 
where $a, b \in \mathbb{C}$. While the choice of this specific basis for $M_4(\Gamma_0(2))$ may seem arbitrary, it simplifies the computations. A key feature of this basis is that each element vanishes at one of the two inequivalent cusps of $\Gamma_0(2)$.

In this paper, we will precisely determine the conditions under which \eqref{eqn3} admits solutions that are modular functions. Specifically, we will prove the following result:
\begin{thm}
A solution to the differential equation
\[
\left\{ h, z \right\} = a \, (\theta_2^8) + b \, (\theta_3 \theta_4)^4
\]
is a modular function if and only if
\[
a = 2 \pi^2 \left(\frac{n_1}{m_1}\right)^2 \quad \text{and} \quad b = 2 \pi^2 \left(\frac{n_2}{m_2}\right)^2,
\]
where $n_1, m_1$ (resp. $n_2, m_2$) are relatively prime positive integers, and one of the following holds:
\begin{enumerate}
    \item $a = b$ and $m_1 = m_2 \in \{2, 3, 4, 5\}$, \\
    \item $(m_1, m_2) \in \{(n, 4) \mid n \geq 1\} \cup \{(2, 2n) \mid n \geq 1\}$, \\
    \item $(m_1, m_2) \in \{(3, 6), (4, 6), (3, 8), (5, 6), (3, 10)\}$.
\end{enumerate}
\end{thm}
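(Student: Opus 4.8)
The plan is to reduce the level-2 problem to the known $\SL$ case from \thmref{forum1} by exploiting the structure of $\Gamma_0(2)$ and the special basis $\{\theta_2^8, (\theta_3\theta_4)^4\}$. The starting observation is that the Atkin--Lehner (Fricke) involution $W_2: \tau \mapsto -1/(2\tau)$ normalizes $\Gamma_0(2)$ and swaps the two cusps, and correspondingly permutes the theta constants; since each basis element vanishes at exactly one cusp, $W_2$ essentially interchanges the roles of $a$ and $b$ (up to an explicit constant from the weight-$4$ automorphy factor). So the two coefficients $a$ and $b$ are governed by the local behavior of $h$ at the two cusps, and one expects the ``$a$-part'' and the ``$b$-part'' to be analyzable somewhat independently, with a compatibility condition gluing them.

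The key steps, in order, would be: (i) Recall that any solution $h$ of \eqref{eqn3} is $\rho$-equivariant for a representation $\rho:\Gamma_0(2)\to\GL_2(\BC)$, and that $h$ is a modular function precisely when $\rho$ has finite image, i.e. factors through a finite quotient; here the representation theory of level-2 subgroups enters (the group $\Gamma_0(2)$ is essentially free of rank $2$ up to torsion, with generators $T$ and a second parabolic/elliptic element), so $\rho$ is determined by the images of two generators, and finiteness of the image is what must be forced. (ii) Analyze the local monodromy of the ODE $y''+Fy=0$ at each of the two cusps of $\Gamma_0(2)$: near a cusp where $F$ has a prescribed leading behavior, the indicial equation gives exponent differences, and the ratio $y_2/y_1$ develops a factor of the form $q_c^{\kappa}$ with $q_c$ a local uniformizer; for $h$ to descend to a modular function on a finite cover, these local exponents must be rational with bounded denominators. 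Computing the $q$-expansions of $\theta_2^8$ and $(\theta_3\theta_4)^4$ at both cusps shows that $a$ controls the exponent at one cusp and $b$ at the other, yielding $a = 2\pi^2(n_1/m_1)^2$, $b = 2\pi^2(n_2/m_2)^2$ with $m_i$ bounded. (iii) Translate the finiteness of $\rho$ into an arithmetic condition on the pair $(m_1,m_2)$: the image group is generated by two rotations (elliptic elements of $\PS$-type) of orders related to $m_1$ and $m_2$, plus a relation coming from the third cusp/elliptic point of $\Gamma_0(2)$ (which lies over the order-$2$ elliptic point of $\SL$), and one must enumerate which pairs $(m_1,m_2)$ produce a finite triangle-type group — this is where the discrete list $\{2,3,4,5\}$ and the sporadic pairs in (3) come from, mirroring how $2\le m\le 5$ arose in \thmref{forum1}. (iv) In the degenerate case $a=b$ one has $F = a\, E_4$ (since $\theta_2^8 + (\theta_3\theta_4)^4$ is proportional to $E_4$), so the equation is genuinely of level 1 and \thmref{forum1} applies verbatim, giving case (1); in the case $m_1 = 4$ (or $m_2$ a multiple corresponding to $\lambda$) one of the two local problems is automatically solvable because $\lambda$ itself solves a level-2 Schwarzian, explaining the unconstrained families in (2).

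For the converse direction, I would exhibit explicit modular-function solutions for each allowed pair: building $h$ as a composition/quotient involving $\lambda$, the Hauptmodul of $\Gamma_0(2)$, and the level-1 solutions from \thmref{forum1} pulled back along the covering maps, checking by direct Schwarzian computation (using the chain rule $\{f\circ g, z\} = (\{f,g\}\circ g)(g')^2 + \{g,z\}$) that the resulting $F$ has the prescribed $(a,b)$. The main obstacle I anticipate is step (iii): correctly identifying the finite list of pairs $(m_1,m_2)$. Unlike the level-1 case, where a single exponent $m$ had to make a $(2,3,m)$ triangle group finite, here the relevant group is controlled by two parameters plus the geometry of $\Gamma_0(2)$ (genus $0$, two cusps, one elliptic point of order $2$), so the finiteness analysis becomes a question about quotients of a two-generator group with one relation — essentially classifying which spherical-type orbifold data $(2; m_1, m_2)$ or the associated Coxeter/von Dyck groups are finite — and making sure the enumeration is exhaustive (neither missing the sporadic pairs $(3,8),(5,6),(3,10),(4,6),(3,6)$ nor admitting spurious ones) will require careful case analysis of the induced representation of $\SL$ obtained by inducing $\rho$ from $\Gamma_0(2)$, together with the constraint that the monodromy around the order-$2$ elliptic point squares to a scalar.
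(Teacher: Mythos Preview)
Your overall architecture matches the paper's: equivariance, Frobenius at the two cusps to extract the rational-square form of $a$ and $b$, the reduction of $a=b$ to \thmref{forum1} via the identity $\theta_2^8+(\theta_3\theta_4)^4=E_4$, and classification of finite-image $\overline{\rho}$. But there is a genuine gap in step (iii), and your framing of that step is off in a way that would lead you to the wrong answer.

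The gap is this: finiteness of the image of $\overline{\rho}$ is \emph{necessary} for $h$ to be modular, but it is \emph{not sufficient} for $h$ to solve the Schwarzian equation with holomorphic right-hand side. In the paper's classification (using Klein's list of finite subgroups of $\PG$ and the presentation $\overline{\Gamma}_0(2)\cong\BZ*\BZ_2$), the possible images are $C_{2n}$, $D_{2n}$, $A_4$, $S_4$, $A_5$, each with explicitly computed torsion-free kernels and cusp-width pairs $(m_1,m_2)$. The cyclic images $C_{2n}$ with $n\ge 2$ produce perfectly good finite-index subgroups (of positive genus), and they are \emph{not} excluded by any triangle-group or spherical-orbifold finiteness criterion: they are already finite. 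What rules them out is a separate analytic argument (Section~6, ``The Cyclic Case''): every $\rho$-equivariant function for such a cyclic $\rho$ is forced to have the shape $\frac{\lambda-2}{\lambda}\,\omega_2^{r}R(\omega_2)$, and a direct computation shows that the derivative of any such expression must vanish somewhere on $\fH$, so its Schwarzian cannot be holomorphic. Your proposal has no mechanism to detect or handle this; a purely group-theoretic enumeration of finite quotients would incorrectly admit the cyclic pairs $(m_1,m_2)=(2n,4n)$, $(n,4n)$, $(2n,2n)$.

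On the framing of (iii): the paper does not pass through triangle groups, Coxeter data, or induction to $\SL$. It works directly with the free-product structure $\overline{\Gamma}_0(2)=\langle T\rangle * \langle RT^{-1}\rangle\cong\BZ*\BZ_2$: a surjection onto a finite $G\subset\PG$ is determined by the image of $T$ and the (necessarily order-$2$) image of $RT^{-1}$, and one reads off the orders of $\rho(T)$ and $\rho(R)$ case by case in $A_4$, $S_4$, $A_5$, $D_{2n}$, $C_{2n}$. The resulting $(m_1,m_2)$ are then exactly the pairs in (2) and (3), with (1) coming from the level-1 reduction. Your ``monodromy around the order-$2$ elliptic point squares to a scalar'' is the right constraint (it is the torsion-free-kernel condition of \lemref{forum2}), but it enters as the requirement $\rho(RT^{-1})\ne 1$, not via an induced representation.
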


In each case, we will determine the finite index subgroup of the modular group on which the solution is modular. These subgroups are of genus 0, and in most cases, we can explicitly determine their Hauptmoduln.

Our investigation employs various tools, including the theory of equivariant functions, the representation theory of $\Gamma_0(2)$, and the Riemann-Hurwitz formula, among others.

The structure of the paper is as follows. Section 2 provides definitions and basic properties of the tools we will use, including a review of key properties of the modular group and the Schwarzian derivative. In Section 3, we connect the Schwarzian equation \eqref{eqn2} to equivariant functions and explain why, in our context, it suffices to consider projective representations. Section 4 applies the Frobenius method to the second-order modular differential equation \eqref{eqn1}, following a change of variables. We first approach this for a general congruence subgroup and then specialize to $\Gamma_0(2)$. Section 5 classifies all representations of $\Gamma_0(2)$ that can give rise to modular solutions. We also determine the invariance groups of these solutions and investigate their genus using the Riemann-Hurwitz formula. Finally, in Section 6, we explicitly compute some solutions in terms of classical modular functions.

\section{Preliminaries}

Let $\SL$ (or $\Gamma(1)$) be the group of $2\times2$ invertible matrices with integer entries. We denote by $\PS$ (or $\overline{\Gamma(1)}$)  the quotient $\SL/\{I,-I\}$ and, for every subgroup $\Gamma \leq \text{SL}_2(\mathbb{Z})$, we denote by $\overline{\Gamma}$ the image of $\Gamma$ via the projection $\SL \to \PS$. We refer to $\SL$ (or to $\PS$ if we are working projectively) as the modular group.

The principal congruence subgroup of level $m$, where $m$ is a positive integer, is defined by
$$ \Gamma(m) := \{ A \in \Gamma(1) \; \mid \; A \equiv I \; \text{ mod }m \}.$$
A subgroup of $\Gamma(1)$ is said to be congruence of level $m$ if it contains $\Gamma(m)$ and no other $\Gamma(n)$ for $n <m$. Examples of such groups are
$$ \Gamma_0(m) = \left\{ \begin{pmatrix}
a & b \\
c & d 
\end{pmatrix} \in \Gamma(1) \; \mid \; c \equiv 0 \; \text{ mod }m \right\}\ ,\ \ m\geq 1.$$

Any subgroup $\G$ of $\SL$ acts on the upper half plane $\mathbb{H} := \{ z \in \mathbb{C} \mid \text{Im}(z) > 0\}$ via linear fractional transformations: for any $\alpha=\begin{pmatrix}
a & b \\
c & d 
\end{pmatrix} \in \G$ and $\tau \in \mathbb{H}$, we have
$$ \alpha\cdot \tau := \frac{a \tau + b}{c \tau +d}.$$

\begin{definition} Let $\Gamma$ be a subgroup of $\SL$ and let $\rho$ be a two-dimensional representation of $\Gamma$. 
A $\rho$-equivariant function associated to $(\Gamma, \rho)$ is a meromorphic function $f$ on $\mathbb{H}$ such that $$f(\alpha \cdot\tau) = \rho(\alpha) \cdot f(\tau) \; \; \text{ for all } \alpha \in \G, \tau \in \mathbb{H}$$
where the action $\cdot$ on both sides is by linear fractional  transformation.
\end{definition}
From the definition, it follows that if $f$ is a $\rho-$equivariant function on $\Gamma$ and $-I \in \Gamma$, then $\rho(-I) \in \mathbb{C}^*I$. Hence, $\rho$ factors through $\{I,-I\}$ inducing a (projective) representation $\overline{\rho} : \overline{\Gamma} \to \PG$. For a more detailed exposition of the theory of $\rho-$equivariant functions and their connection to vector-valued modular forms, we refer the reader to \cite{vvmf}. Notice that this definition can be extended to any discrete subgroup of $\SR$.

Let 
$$ T = \left[ {\begin{array}{cc}
   1 & 1 \\
   0 & 1 \\
  \end{array} } \right], \; S = \left[ {\begin{array}{cc}
   0 & -1 \\
   1 & 0 \\
  \end{array} } \right] \; \text{ and } \; R = ST^{-2}S^{-1} = \left[ {\begin{array}{cc}
   1 & 0 \\
   2 & 1 \\
  \end{array} } \right].$$

The congruence subgroup $\Gamma_0(2)$ is generated by the matrices $-I, T$ and $R$ which verify the relation $(RT^{-1})^2= -I.$ Moreover, the group can be given by the following presentation 
$$ \left< a,b \mid b^4=1, \, ab^2=b^2a \right>.$$
Its projective image $\overline{\Gamma}_0(2)$  is isomorphic to the free product $\mathbb{Z} \ast \mathbb{Z}_2 := \left< a,b \mid b^2 =1\right>$ via the map 
\begin{equation}\label{isom}
    (a,b) \mapsto \left(T,RT^{-1}\right).
\end{equation}

We recall that $\Gamma_0(2)$ is a subgroup of index 3 in $\G(1)$ and has two inequivalent cusps. In this paper, we will use $0$ and $\infty$ as representatives of their respective equivalence classes.

Let $\Gamma$ be a subgroup of the modular group. A weakly modular form $h$ of weight $k$ over $\Gamma$ is a meromorphic function on $\mathbb{H}$ that verifies
$$ h\left(\begin{pmatrix}
a & b \\
c & d 
\end{pmatrix} \cdot \tau \right) = (c \tau + d)^k h(\tau) \; \; \text{ for all } \begin{pmatrix}
a & b \\
c & d 
\end{pmatrix} \in \Gamma. $$
A modular form $h$ is a weakly modular form that is also holomorphic on $\mathbb{H}$ and at the cusps.

We end this section by including some properties of the Schwarzian derivative, defined in \eqref{schwarz}, which we will need in the rest of the paper.

\begin{prop}\label{sch}
    Let $h$ and $f$ be meromorphic functions on $\mathbb{H}$, then
\begin{enumerate}
    \item $\{ h, \tau\} = \{ g, \tau\}$   \text{ if and only if }   $f= \gamma \cdot h$ \; for some $\gamma \in \text{GL}_2(\mathbb{C})$.

    \item For all  $\gamma \in \text{GL}_2(\mathbb{C})$, we have 
    $$ \{ f, \gamma \cdot \tau\} = \frac{j(\gamma, \tau)^4}{\text{{det}}^2(\gamma)} \, \{ f, \tau\},$$
    where $j\left( \begin{pmatrix}
a & b \\
c & d 
\end{pmatrix},\tau\right) := c \tau + d.$

    \item If $\omega$ is a meromorphic function of $\tau$ then
    \begin{equation}\label{cocycle} \{f, \tau\} = \{f, \omega\} \left( \frac{d\omega}{d\tau}\right)^2 + \{ \omega, \tau\}.
    \end{equation}
\end{enumerate}
\end{prop}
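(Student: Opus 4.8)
The three statements are the classical transformation properties of the Schwarzian derivative, and my plan is to derive all of them from the single composition identity (3), proving that one first by direct computation. For (3) I would set $g = f \circ \omega$, apply the chain rule to get $g' = (f'\circ\omega)\,\omega'$, and then compute the logarithmic derivative
\[
\frac{g''}{g'} = \Bigl(\frac{f''}{f'}\circ\omega\Bigr)\omega' + \frac{\omega''}{\omega'} ,
\]
where primes on $f$ denote differentiation in its own variable $\omega$. Differentiating once more in $\tau$ and substituting into the definition \eqref{schwarz}, the cross terms organize themselves so that the factor $(\omega')^2$ attaches to $\{f,\omega\}$ while the remaining terms assemble into $\{\omega,\tau\}$. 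This is a routine but slightly lengthy manipulation, and it is the only genuinely computational step in the proposition.

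Next I would record the key lemma that a nonconstant meromorphic $\gamma$ satisfies $\{\gamma,\tau\}\equiv 0$ if and only if $\gamma$ is a linear fractional transformation. The ``only if'' direction is the substance: writing $v = \gamma''/\gamma'$, the condition $\{\gamma,\tau\}=0$ becomes the Riccati equation $v' = \tfrac12 v^2$, whose solutions are $v = -2/(\tau-c)$ (or $v\equiv 0$); integrating $\gamma''/\gamma' = v$ twice then yields $\gamma(\tau) = (A\tau+B)/(C\tau+D)$. The ``if'' direction is the corresponding special case of the same computation, or a one-line verification.

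With these in hand, (2) follows immediately by taking $\omega = \gamma\cdot\tau$ in (3): since $\gamma\cdot\tau$ is Möbius in $\tau$ we have $\{\gamma\cdot\tau,\tau\}=0$, hence $\{f,\tau\} = \{f,\gamma\cdot\tau\}\,(d(\gamma\cdot\tau)/d\tau)^2$, and because $d(\gamma\cdot\tau)/d\tau = \det(\gamma)/j(\gamma,\tau)^2$, rearranging produces the stated factor $j(\gamma,\tau)^4/\det^2(\gamma)$. For (1), the ``if'' direction is again (3) with a Möbius outer function: $\{\gamma\cdot h,\tau\} = \{\gamma, h\}(h')^2 + \{h,\tau\} = \{h,\tau\}$ since $\{\gamma,\cdot\}=0$. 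For the converse, assuming $\{h,\tau\}=\{f,\tau\}$, I would set $\phi = f\circ h^{-1}$ so that $f = \phi\circ h$; applying (3) gives $\{f,\tau\} - \{h,\tau\} = \{\phi, h\}(h')^2$, so $\{\phi,h\}\equiv 0$, whence $\phi$ is Möbius by the lemma and $f = \gamma\cdot h$.

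I expect no serious obstacle, as these are standard facts; the only care needed is (i) the bookkeeping in the direct computation of (3), keeping track of the variable in which each derivative is taken, and (ii) the mild nondegeneracy hypotheses — $h$ nonconstant so that $h'\not\equiv 0$ and $h^{-1}$ exists locally — that make the composition arguments valid.
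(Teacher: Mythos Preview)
The paper does not actually prove this proposition: it is stated as a list of classical properties of the Schwarzian derivative and no argument is given. Your proposal is correct and follows the standard route---prove the chain rule \eqref{cocycle} by direct computation, characterize M\"obius maps as the solutions of $\{\gamma,\tau\}=0$ via the Riccati equation for $v=\gamma''/\gamma'$, and deduce (1) and (2) from these two ingredients---so there is nothing to compare against; you are simply supplying what the paper omits. The only minor caveats are the ones you already flag (keeping straight which variable each derivative is taken in, and the implicit assumption that $h$ is nonconstant so the local inverse in the converse of (1) makes sense); note also the harmless typo in the paper's statement of (1), where the second function is called both $f$ and $g$.
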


We end this section by introducing some classical modular forms and modular functions that we will use in the rest of the paper. The main reference here is \cite{rankin}.

For $\tau \in \mathbb{H},$ let $q = e^{2 \pi i \tau}$. The Dedekind $\eta-$function is defined by:
$$ \eta(\tau) = q^{\frac{1}{24}} \prod_{n=1}^\infty (1 - q^n).$$
Under the action of $T$ and $S$, it verifies
\begin{equation}
    \eta(\tau + 1) = e^{\frac{2 \pi i}{12}}\eta(\tau) \;  \text{ and } \; \eta\left(\frac{-1}{\tau}\right) = \sqrt{-i \tau} \eta(\tau).
\end{equation}
The Jacobi theta functions are given by:
$$\theta_2(\tau) = 2q^{\frac{1}{8}}\prod_{n=1}^\infty (1-q^n)(1+q^n)^2= \frac{2\eta(4\tau)^2}{\eta(2 \tau)},$$
$$\theta_3(\tau) = \prod_{n=1}^\infty (1-q^n)(1+q^{n-\frac{1}{2}})^2 = \frac{\eta(2\tau)^5}{\eta(\tau)^2 \eta(4 \tau)^2}$$
and 
$$\theta_4(\tau) = \prod_{n=1}^\infty (1-q^n)(1-q^{n-\frac{1}{2}})^2 = \frac{\eta(\tau)^2}{\eta(2 \tau)}.$$

Under $T$ and $S$, the Jacobi theta functions transform as follows:
$$ \Big(\theta_2, \theta_3, \theta_4\Big) \circ T = \Big( \sqrt{i}\theta_2, \theta_4, \theta_3\Big)$$
and
$$ \Big(\theta_2, \theta_3, \theta_4\Big) \circ S = \sqrt{-i \tau}\Big(\theta_4, \theta_3, \theta_2\Big).$$

The classical Eisenstein series of weight $4$ and $6$ are defined respectively as 
$$ E_4 (\tau)= 1 + 240 \sum_{n=1}^{\infty} \sigma_3(n) q^n$$
and
$$
E_6(\tau)=1-504\sum_{n=1}^{\infty} \sigma_5(n) q^n,
$$
where $\sigma_k(n)$ is the sum of the $k-$th powers of the positive divisors of $n$. We also recall that the elliptic modular function $\lambda$ is given by
$$ \lambda = \frac{\theta_2^4}{\theta_3^4}.$$
Finally, we present some identities that will be useful later on:
\begin{equation}\label{thetas}
    \theta_3^4 = \theta_2^4 + \theta_4^4 \; \;  \text{ and } \; \;  E_4 = \theta_2^8 + (\theta_3 \theta_4)^4. 
\end{equation}

\section{Solutions as $\rho$-equivariant functions.}
Unless stated otherwise, we will assume throughout this section that $\Gamma$ is a finite-index subgroup of $\Gamma(1)$.  Using the properties of the Schwarzian derivative discussed earlier, we derive the following result:

\begin{thm}\cite{forum}
     Let $h$ be a meromorphic function on $\mathbb{H}$, then $\{h,\tau\}$ is a weight $4$ automorphic form for $\Gamma$ if and only if $h$ is $\rho-$equivariant for some $2-$dimensional representation $\rho$ of $\Gamma$.
\end{thm}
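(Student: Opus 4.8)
The plan is to prove both directions using Proposition~\ref{sch}, exploiting the cocycle-type transformation law in part (2).

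First, suppose $h$ is $\rho$-equivariant for some two-dimensional representation $\rho$ of $\Gamma$. Fix $\alpha = \begin{pmatrix} a & b \\ c & d \end{pmatrix} \in \Gamma$. Then $h(\alpha\cdot\tau) = \rho(\alpha)\cdot h(\tau)$, where the right-hand side is a linear fractional transformation applied to $h(\tau)$. Applying the Schwarzian derivative in the variable $\tau$ to both sides and using the chain rule identity \eqref{cocycle} with $\omega = \alpha\cdot\tau$ (so that $d\omega/d\tau = j(\alpha,\tau)^{-2}$ when $\det\alpha = 1$, or more precisely $d\omega/d\tau = \det(\alpha)/j(\alpha,\tau)^2$), I get
\[
\{h\circ\alpha, \tau\} = \{h, \alpha\cdot\tau\}\left(\frac{d(\alpha\cdot\tau)}{d\tau}\right)^2 + \{\alpha\cdot\tau,\tau\}.
\]
Since $\alpha\cdot\tau$ is itself a Möbius transformation of $\tau$, it has vanishing Schwarzian: $\{\alpha\cdot\tau,\tau\} = 0$. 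On the other hand, $h\circ\alpha = \rho(\alpha)\cdot h$, and by Proposition~\ref{sch}(1), post-composition with an element of $\mathrm{GL}_2(\mathbb{C})$ does not change the Schwarzian, so $\{h\circ\alpha,\tau\} = \{h,\tau\}$. Combining and using $d(\alpha\cdot\tau)/d\tau = \det(\alpha)/j(\alpha,\tau)^2$ gives
\[
\{h,\tau\} = \{h,\alpha\cdot\tau\}\cdot\frac{\det(\alpha)^2}{j(\alpha,\tau)^4},
\]
i.e. $\{h,\alpha\cdot\tau\} = \frac{j(\alpha,\tau)^4}{\det(\alpha)^2}\{h,\tau\}$, which is precisely the weight 4 automorphy condition (consistent with Proposition~\ref{sch}(2)). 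Meromorphy of $\{h,\tau\}$ on $\mathbb{H}$ follows from that of $h$, and one notes the behavior at the cusps is automatically that of an automorphic form (a weakly modular / meromorphic-at-cusps form is all that is claimed here, so no growth condition needs checking).

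Conversely, suppose $\{h,\tau\}$ is a weight 4 automorphic form for $\Gamma$, so $\{h,\alpha\cdot\tau\} = \frac{j(\alpha,\tau)^4}{\det(\alpha)^2}\{h,\tau\}$ for all $\alpha\in\Gamma$. Fix $\alpha\in\Gamma$ and consider the function $h\circ\alpha$. Running the same cocycle computation in reverse, $\{h\circ\alpha,\tau\} = \{h,\alpha\cdot\tau\}\frac{\det(\alpha)^2}{j(\alpha,\tau)^4} + 0 = \{h,\tau\}$. By Proposition~\ref{sch}(1), two meromorphic functions with the same Schwarzian differ by post-composition with an element of $\mathrm{GL}_2(\mathbb{C})$; hence there exists $\rho(\alpha)\in\mathrm{GL}_2(\mathbb{C})$ with $h(\alpha\cdot\tau) = \rho(\alpha)\cdot h(\tau)$. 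It remains to check that $\alpha\mapsto\rho(\alpha)$ is a homomorphism (into $\mathrm{PGL}_2(\mathbb{C})$, and then lifted): for $\alpha,\beta\in\Gamma$, comparing $h((\alpha\beta)\cdot\tau)$ computed two ways gives $\rho(\alpha\beta)\cdot h(\tau) = \rho(\alpha)\cdot(\rho(\beta)\cdot h(\tau)) = (\rho(\alpha)\rho(\beta))\cdot h(\tau)$ as linear fractional transformations, and since $h$ is nonconstant its values are not confined to a point, so $\rho(\alpha\beta) = \rho(\alpha)\rho(\beta)$ in $\mathrm{PGL}_2(\mathbb{C})$; one then lifts to a genuine (projective-)representation as in the discussion following the definition of equivariant functions.

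The main subtlety — and where I would be most careful — is the well-definedness and homomorphism property of $\rho$ in the converse direction: the matrix $\rho(\alpha)$ is only determined up to scalar, so the natural object is $\overline{\rho}:\Gamma\to\mathrm{PGL}_2(\mathbb{C})$, and one must argue (using that $h$ is nonconstant, which follows since its Schwarzian is defined and $h'\not\equiv 0$) that the cocycle relation forces a genuine homomorphism, and that it can be lifted to $\mathrm{GL}_2(\mathbb{C})$ — this is exactly the point the paper flags in the remark about $\rho(-I)\in\mathbb{C}^*I$ and factoring through $\{\pm I\}$. A secondary point worth a sentence is verifying that "automorphic form" here is meant in the weak/meromorphic sense so that meromorphy of $h$ on $\mathbb{H}$ suffices and no holomorphy or cusp-growth hypotheses enter; with that understood, the equivalence is a clean consequence of parts (1)–(3) of Proposition~\ref{sch}.
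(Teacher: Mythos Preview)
The paper does not actually give a proof of this theorem; it is simply cited from \cite{forum}. Your argument is correct and is exactly the standard one: it uses precisely the three properties of the Schwarzian collected in Proposition~\ref{sch}, which is presumably why the paper records that proposition immediately beforehand. The forward direction is clean; in the converse you correctly identify the one genuine subtlety, namely that Proposition~\ref{sch}(1) only determines $\rho(\alpha)$ up to scalar, so the map one obtains naturally lands in $\mathrm{PGL}_2(\mathbb{C})$. Your handling of this is fine and matches the paper's own attitude (it immediately passes to $\overline{\rho}$ in the subsequent discussion), though you might tighten the last sentence: the lift from $\mathrm{PGL}_2(\mathbb{C})$ to $\mathrm{GL}_2(\mathbb{C})$ is not entirely automatic in general, but for the purposes of this paper only the projective representation $\overline{\rho}$ is ever used, so the point is moot.
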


It is  very useful to indicate that if $h$ is $\rho-$equivariant for some $2-$dimensional representation $\rho$ of $\Gamma$, then
$ \{ h, \tau \}$ is holomorphic on $\fH$ if and only if $h'$ does not vanish on $\fH$, that is, $h$ is everywhere locally univalent.

    It is also important to note that a solution $h$ does not uniquely determine the representation $\rho$. Indeed, if $h$ is $\rho-$equivariant, it is also $\rho'$-equivariant where $\rho'$ is any  lift of $\overline{\rho}$ to a representation of $\Gamma$. This will play an important role later on because the group structure of $\overline{\Gamma}_0(2)$ is simpler than that of $\Gamma_0(2)$.

    \begin{lem}
    Let $h$ (resp. $g$) be a nonconstant meromorphic function that is $\rho$-equivariant (resp. $\mu-$equivariant) on $\Gamma$. If
    $ \{ h, \tau\} = \{ g,\tau\}$ then  
    $$ \overline{\rho} = \sigma \overline{\mu} \sigma^{-1} \text{ for some } \sigma \in \PG.$$
\end{lem}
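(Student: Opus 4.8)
The plan is to exploit Proposition~\ref{sch}(1): since $\{h,\tau\}=\{g,\tau\}$ and both $h,g$ are nonconstant meromorphic functions on $\fH$, there exists $\gamma\in\GL_2(\BC)$ with $g=\gamma\cdot h$, where the action is by linear fractional transformations. This single algebraic fact is the engine of the whole argument; everything else is bookkeeping about how the equivariance relations interact with this $\gamma$.

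Next I would feed this into the two equivariance relations. For every $\alpha\in\Gamma$ and $\tau\in\fH$ we have $h(\alpha\cdot\tau)=\rho(\alpha)\cdot h(\tau)$ and $g(\alpha\cdot\tau)=\mu(\alpha)\cdot g(\tau)$. Substituting $g=\gamma\cdot h$ into the second relation gives $\gamma\cdot h(\alpha\cdot\tau)=\mu(\alpha)\cdot\gamma\cdot h(\tau)$, hence, using the first relation, $\gamma\cdot\rho(\alpha)\cdot h(\tau)=\mu(\alpha)\cdot\gamma\cdot h(\tau)$ as an identity of meromorphic functions in $\tau$. Therefore the two Möbius transformations $\gamma\rho(\alpha)$ and $\mu(\alpha)\gamma$ agree on the image of $h$, which is an infinite subset of $\PC$ (here is where nonconstancy of $h$ is used); since a Möbius transformation is determined by its values at three distinct points, the matrices $\gamma\rho(\alpha)$ and $\mu(\alpha)\gamma$ are equal up to a nonzero scalar, i.e. they have the same image in $\PG$. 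Passing to $\PG$ and writing $\sigma$ for the class of $\gamma$, this reads $\sigma\,\overline{\rho}(\alpha)=\overline{\mu}(\alpha)\,\sigma$ for all $\alpha\in\Gamma$, equivalently $\overline{\mu}(\alpha)=\sigma\,\overline{\rho}(\alpha)\,\sigma^{-1}$.

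One small point to address carefully: the relations $\overline{\rho}$ and $\overline{\mu}$ are representations of $\overline{\Gamma}$, not $\Gamma$, so I should note at the outset (as the excerpt already does, via the observation that $\rho(-I)\in\BC^*I$ when $-I\in\Gamma$) that $\rho$ and $\mu$ descend to projective representations of $\overline{\Gamma}$, and that the displayed computation above, being an identity in $\PG$, is visibly insensitive to the lift; so the conclusion is a genuine statement about $\overline{\rho},\overline{\mu}:\overline{\Gamma}\to\PG$. It is also worth confirming that $\gamma$ is actually invertible in $\PG$: this is automatic since $\gamma\in\GL_2(\BC)$ and $g=\gamma\cdot h$ is nonconstant, so $\gamma\notin\BC^* I$ would still be fine—any element of $\GL_2(\BC)$ has an invertible image in $\PG$.

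I do not anticipate a serious obstacle here; the only place requiring genuine care is the inference ``two Möbius maps agreeing on the image of a nonconstant meromorphic function are equal,'' which rests on the fact that such an image is infinite (indeed its closure has nonempty interior, but even three points suffice). The remaining verification—that $\alpha\mapsto\sigma\overline{\rho}(\alpha)\sigma^{-1}$ is the same as $\overline{\mu}$ as a group homomorphism, not merely pointwise—is immediate since the equation $\sigma\overline{\rho}(\alpha)=\overline{\mu}(\alpha)\sigma$ was established for every $\alpha\in\Gamma$ individually, which is exactly the pointwise statement of conjugacy. Thus the proof is essentially a two-line consequence of Proposition~\ref{sch}(1) once the equivariance relations are combined.
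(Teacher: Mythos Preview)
Your proposal is correct and follows essentially the same route as the paper: invoke Proposition~\ref{sch}(1) to write one function as a M\"obius transform of the other, then combine the two equivariance relations to obtain $\gamma\rho(\alpha)$ and $\mu(\alpha)\gamma$ acting identically on the (infinite) image of $h$, hence equal in $\PG$. The paper's proof is terser (it writes $h=\sigma\cdot g$ and the chain $\rho(\alpha)h(\tau)=h(\alpha\tau)=\sigma\mu(\alpha)g(\tau)=\sigma\mu(\alpha)\sigma^{-1}h(\tau)$, then says ``from which the result follows''), but the substance is the same; your extra care in justifying why two M\"obius maps agreeing on an infinite set coincide is a welcome elaboration rather than a different method.
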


\begin{proof}
    By proposition 2.2, $ \{ h, \tau\} = \{ g,\tau\} $ implies that $h = \sigma \cdot g$ for some $\sigma \in \GC$. Therefore, for all $\alpha \in \Gamma$ and $\tau \in \mathbb{H}$, we have
    $$ \rho(\alpha) h(\tau) = h(\alpha \tau) =\sigma \mu(\alpha)g(\tau) = \sigma \mu(\alpha) \sigma^{-1} h(\tau),$$
    from which the result follows.
\end{proof}

The next proposition will allow us to extend our results about $\Gamma_0(2)$ to its conjugates in $\Gamma(1)$.

\begin{prop}\label{conj}
    Let $F$ be a weight $4$ modular form on $\Gamma$ and let $\lambda \in \Gamma(1)$. Then $j(\lambda,\tau)^{-4}F(\lambda \tau)$ is a weight $4$ modular form on $\lambda^{-1} \Gamma \lambda$ and
    $$ \{ h,\tau\} = F \; \iff \; \{h \circ \lambda, \tau\} = j(\lambda,\tau)^{-4}F(\lambda \tau).$$
\end{prop}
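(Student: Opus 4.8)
The plan is to verify the two claims in \propref{conj} by direct computation using the transformation properties of modular forms and the cocycle identity for the Schwarzian derivative from \propref{sch}(3).

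First I would prove that $G(\tau) := j(\lambda,\tau)^{-4} F(\lambda\tau)$ is a weight $4$ modular form on $\lambda^{-1}\Gamma\lambda$. Let $\alpha \in \lambda^{-1}\Gamma\lambda$, say $\alpha = \lambda^{-1}\beta\lambda$ with $\beta \in \Gamma$. The key ingredient is the cocycle relation $j(\beta\gamma,\tau) = j(\beta,\gamma\tau)\,j(\gamma,\tau)$, which I would apply to the identity $\lambda\alpha = \beta\lambda$ to rewrite $j(\lambda,\alpha\tau)$ in terms of $j(\beta,\lambda\tau)$, $j(\lambda,\tau)$ and $j(\alpha,\tau)$. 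Plugging $\alpha\tau$ into $G$, using $\lambda(\alpha\tau) = \beta(\lambda\tau)$, and invoking the weight $4$ modularity of $F$ under $\beta$, the factors of $j(\beta,\lambda\tau)^4$ cancel and one is left with $G(\alpha\tau) = j(\alpha,\tau)^4 G(\tau)$. Holomorphy on $\fH$ is immediate since $\lambda$ maps $\fH$ to $\fH$ and $j(\lambda,\tau)$ is nonvanishing there; holomorphy at the cusps follows because $\lambda$ permutes the cusps of $\Gamma(1)$ and conjugation carries cusp data of $\Gamma$ to cusp data of $\lambda^{-1}\Gamma\lambda$.

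Next I would prove the equivalence $\{h,\tau\} = F \iff \{h\circ\lambda,\tau\} = G$. Apply the chain rule \eqref{cocycle} with $\omega = \lambda\tau$: then $\{h\circ\lambda,\tau\} = \{h,\lambda\tau\}\left(\frac{d(\lambda\tau)}{d\tau}\right)^2 + \{\lambda,\tau\}$. Since $\lambda$ acts by a linear fractional transformation, $\{\lambda,\tau\} = 0$, and $\frac{d(\lambda\tau)}{d\tau} = \det(\lambda)\, j(\lambda,\tau)^{-2} = j(\lambda,\tau)^{-2}$ because $\det\lambda = 1$ for $\lambda \in \Gamma(1)$. Hence $\{h\circ\lambda,\tau\} = j(\lambda,\tau)^{-4}\{h,\lambda\tau\}$. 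Comparing with the definition of $G$, the equation $\{h\circ\lambda,\tau\} = G(\tau) = j(\lambda,\tau)^{-4}F(\lambda\tau)$ holds identically in $\tau$ if and only if $\{h,\lambda\tau\} = F(\lambda\tau)$ for all $\tau$; since $\lambda$ is a bijection of $\fH$, this is equivalent to $\{h,w\} = F(w)$ for all $w \in \fH$, i.e. $\{h,\tau\} = F$.

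I do not expect a serious obstacle here; the statement is essentially a bookkeeping exercise combining the cocycle property of $j$ with the composition formula for the Schwarzian. The one point that requires a little care is keeping the $j$-factors and their exponents consistent — in particular using $\det\lambda = 1$ to identify $\frac{d(\lambda\tau)}{d\tau}$ with $j(\lambda,\tau)^{-2}$ and noting that this is exactly the weight $4$ twisting factor appearing in \propref{sch}(2), so that $\{f,\gamma\tau\} = j(\gamma,\tau)^4\{f,\tau\}$ is really the same computation. The cusp condition in the first claim is the only non-formula step, and it is standard once one observes that $\lambda \in \Gamma(1)$ sends cusps to cusps.
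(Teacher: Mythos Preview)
Your proposal is correct and follows essentially the same approach as the paper: the paper invokes the cocycle identity $j(\sigma\gamma,\tau)=j(\sigma,\gamma\tau)j(\gamma,\tau)$ for the modularity claim and then appeals to \propref{sch} for the Schwarzian equivalence, which is exactly the computation you spell out via \eqref{cocycle} together with $\{\lambda,\tau\}=0$ and $d(\lambda\tau)/d\tau=j(\lambda,\tau)^{-2}$. Your version is simply more explicit than the paper's terse proof.
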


\begin{proof}
The fact that \( j(\lambda, \tau)^{-4} F(\lambda \tau) \) is an automorphic form follows from the cocycle property:
\[
j(\sigma \gamma, \tau) = j(\sigma, \gamma \tau) j(\gamma, \tau) \quad \text{for all } \sigma, \gamma \in \Gamma(1) \text{ and } \tau \in \mathbb{H}.
\]
The holomorphy of this form on \(\fH\) and at the cusps is immediate. The second part of the theorem follows from \propref{sch}.
\end{proof} 
In fact, the two conjugates of $\G_0(2)$ we will need are:
$$ \Gamma^0(2) = S \Gamma_0(2) S^{-1} = \left\{ \begin{pmatrix}
a & b \\
c & d 
\end{pmatrix} \in \Gamma(1) \; \mid \; b \equiv 0 \; \text{ mod }m \right\}$$
and
\begin{align*}
    \Gamma_\theta(2) &= V \Gamma_0(2) V^{-1}  \\
    &= \left\{ \begin{pmatrix}
a & b \\
c & d 
\end{pmatrix} \in \Gamma(1)  \mid  b,c \text{ both even or } a,d \text{ both even} \right\}
\end{align*}
where $V:=(ST)^2$. Thus, by \propref{conj}, classifying all the modular solutions for \eqref{eqn3}  will amount to classifying them for weight 4 modular forms over $\Gamma^0(2)$ and $\Gamma_\theta(2)$.

\smallskip 
We conclude this section with the following lemma, which will be useful in subsequent sections.

\begin{lem}\cite{forum}\label{forum2}
    Let $F$ be a weight $4$ modular form for $\Gamma$ and let $h$ be a meromorphic solution to $\{h,\tau\}=F$. If $\rho$ is a $2-$dimensional representation corresponding to $h$, then $\text{ker}(\overline{\rho})$ is a torsion-free normal subgroup of $\overline{\Gamma}$.
\end{lem}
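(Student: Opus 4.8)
The plan is to show that $\ker(\overline{\rho})$ is normal by a general principle about equivariant functions, and then to show it is torsion-free by exploiting the local univalence of $h$ together with the fact that any nontrivial torsion element of $\overline{\Gamma} \le \PS$ is elliptic, hence fixes a point of $\fH$. First I would establish normality: let $N = \ker(\overline{\rho}) \le \overline{\Gamma}$. Since $\overline{\rho}$ is a group homomorphism from $\overline{\Gamma}$ to $\PG$, its kernel is automatically a normal subgroup of $\overline{\Gamma}$; no argument specific to the Schwarzian setting is needed here. (If one prefers to work with $\rho$ itself, one notes that by the discussion following Definition~2.3, $\rho(-I) \in \BC^* I$, so $\rho$ descends to $\overline{\rho}$ on $\overline{\Gamma}$, and the claim is about $\overline{\rho}$.)

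The substance is torsion-freeness. Suppose for contradiction that $\overline{\gamma} \in \overline{\Gamma}$ is a nontrivial element of finite order lying in $\ker(\overline{\rho})$. A nontrivial finite-order element of $\PS$ is elliptic, so $\gamma$ fixes some point $\tau_0 \in \fH$; say $\gamma$ has order $k \ge 2$ in $\overline{\Gamma}$. Because $\overline{\gamma} \in \ker(\overline{\rho})$, equivariance gives $h(\gamma \tau) = h(\tau)$ for all $\tau \in \fH$ (the linear fractional action of $\rho(\gamma)$ is trivial since its class in $\PG$ is the identity). Thus $h$ is invariant under the cyclic group $\langle \overline{\gamma}\rangle$ of order $k$, which acts on a neighbourhood of $\tau_0$ as a rotation of order $k$ fixing $\tau_0$.

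The key step — and the one I expect to be the main point to get right — is to derive a contradiction with local univalence. Since $\{h,\tau\} = F$ with $F$ a \emph{holomorphic} weight $4$ modular form, $\{h,\tau\}$ is holomorphic on all of $\fH$; by the remark immediately after Theorem~3.1, this forces $h'$ to be nowhere vanishing on $\fH$, i.e. $h$ is everywhere locally univalent. But an $h$ that is invariant under a rotation of exact order $k \ge 2$ about $\tau_0$ cannot be locally injective near $\tau_0$: in a local coordinate $w$ centred at $\tau_0$ in which $\overline{\gamma}$ acts by $w \mapsto \zeta w$ with $\zeta$ a primitive $k$-th root of unity, the invariance $h(\zeta w) = h(w)$ means the Taylor expansion of $h$ at $\tau_0$ only involves powers of $w^k$, so $h(w) = h(0) + c\, w^k + \cdots$ and hence $h'(\tau_0) = 0$, contradicting local univalence. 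Therefore no such $\overline{\gamma}$ exists, and $\ker(\overline{\rho})$ is torsion-free. One should be slightly careful to phrase the local-coordinate argument so that it also covers the case where $\tau_0$ happens to be a pole of $h$ (apply the same reasoning to $1/(h - h(\tau_0))$, or to $1/h$, which is again $\langle\overline{\gamma}\rangle$-invariant and locally univalent wherever $h$ is), but modulo that bookkeeping the argument is complete.
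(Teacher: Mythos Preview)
Your argument is correct. The paper does not prove this lemma; it simply cites it from \cite{forum}. Your proof is the standard one: normality is automatic for a kernel, and torsion-freeness follows from the local univalence of $h$ (a consequence of the holomorphy of $F=\{h,\tau\}$ on $\fH$, as the paper notes after Theorem~3.1) together with the fact that a nontrivial torsion element of $\overline{\Gamma}\le\PS$ is elliptic and hence fixes a point of $\fH$, forcing $h'$ to vanish there. Your handling of the pole case via $1/h$ is appropriate. Nothing is missing.
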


\section{The $q$-expansion of the solutions at the cusps}
In this section, we fix  a level $m$ congruence subgroup $\G$ of $\text{SL}_2(\mathbb{Z})$, and let $F$ be a modular form of weight 4 for $\Gamma$. We assume that $F$ does not vanish at any cusp; a condition that will later be shown necessary for equation (1.5) to admit modular solutions. We begin by relating solutions to the equation
\begin{equation}\label{4.1}
    \{h,z\} = F
\end{equation}
 to solutions of the differential equation
\begin{equation}\label{4.2}
     y'' + \frac{F}{2}y = 0.
\end{equation}

\begin{thm}\textup{\cite{mathann}}\label{thm4.1}
    Suppose that $F$ is a weight $4$ modular form for $\Gamma$ then
    \begin{enumerate}
        \item If $y_1$ and $y_2$ are two holomorphic solutions to (4.2) then $h:=y_1/y_2$ solves \eqref{4.1}.  
        \item If $h$ is a solution to (4.1) then $h/\sqrt{h'}$ and $1/\sqrt{h'}$ define  two linearly independent holomorphic solutions to \eqref{4.2}.
    \end{enumerate}
\end{thm}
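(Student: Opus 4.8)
The plan is to verify both directions by direct computation with the Schwarzian and the Wronskian, following the classical ODE/Schwarzian dictionary. For (2), suppose $h$ solves \eqref{4.1}, so $\{h,z\} = F$ and $h$ is locally univalent (where $h' \neq 0$; a priori on the open set where $h'$ does not vanish, and we will remark on the extension below). Set $y_1 = h/\sqrt{h'}$ and $y_2 = 1/\sqrt{h'}$. The key routine identity is that for any function $y$, writing $h = y_1/y_2$ with $y_2 = 1/\sqrt{h'}$, one has $y_1 y_2' - y_1' y_2 = -1$ (a constant Wronskian), and a differentiation of $y_j = u_j/\sqrt{h'}$ shows $y_j'' = -\tfrac12\{h,z\}\,y_j$. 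Concretely, I would compute $(1/\sqrt{h'})'' = \tfrac12\left(\tfrac32 (h''/h')^2 - h'''/h'\right)(h')^{-1/2}$ and recognize the bracket as $-\tfrac12\{h,z\}$, and similarly for $h/\sqrt{h'}$ using $h \cdot (1/\sqrt{h'})'' + 2 h' (1/\sqrt{h'})' + h''(1/\sqrt{h'}) = h\cdot(1/\sqrt{h'})''$ after the middle terms cancel. This gives $y_j'' + \tfrac{F}{2} y_j = 0$. Linear independence follows since their Wronskian is $\pm 1 \neq 0$, and holomorphy on $\fH$ follows from the hypothesis that $F$ (hence $\{h,z\}$) is holomorphic on $\fH$, which by the remark after Theorem 3.1 forces $h' \neq 0$ throughout $\fH$, so $1/\sqrt{h'}$ and $h/\sqrt{h'}$ are holomorphic (choosing a branch of the square root locally; the combination is single-valued up to sign, which does not affect the conclusion).

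For (1), suppose $y_1, y_2$ are holomorphic solutions to \eqref{4.2} and set $h = y_1/y_2$. Their Wronskian $W = y_1' y_2 - y_1 y_2'$ satisfies $W' = y_1'' y_2 - y_1 y_2'' = -\tfrac{F}{2}(y_1 y_2 - y_1 y_2) = 0$, so $W$ is a nonzero constant provided $y_1, y_2$ are linearly independent (which I would either assume as part of the hypothesis of (1) or note is needed for $h$ to be nonconstant). Then $h' = -W/y_2^2$, and a standard computation of $\{h, z\}$ in terms of $y_1, y_2$ — expand $h''/h'$ and its derivative — collapses, using $y_j'' = -\tfrac{F}{2} y_j$, to $\{h,z\} = F$. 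This is the cleanest route: $h''/h' = -2y_2'/y_2$, so $(h''/h')' = -2y_2''/y_2 + 2(y_2'/y_2)^2 = F + 2(y_2'/y_2)^2$ and $\tfrac12(h''/h')^2 = 2(y_2'/y_2)^2$, hence $\{h,z\} = F$.

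Almost everything here is routine; the only genuine points requiring care are (a) the branch-of-square-root issue in part (2) — one must check that although $\sqrt{h'}$ is only locally defined, the ratio $h/\sqrt{h'}$ and $1/\sqrt{h'}$ transform by the same sign under analytic continuation, so the pair is well-defined up to a global sign and the span is unambiguous — and (b) matching the normalization: the factor $F/2$ in \eqref{4.2} versus $F$ in \eqref{4.1} must be tracked consistently, and I would double-check the constant in $(1/\sqrt{h'})'' = -\tfrac12\{h,z\}(h')^{-1/2}$. These are exactly the steps I expect to be the (mild) obstacle; beyond them the proof is a sequence of bookkeeping identities, and since the statement is quoted from \cite{mathann} I would keep the verification brief.
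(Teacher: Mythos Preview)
Your proposal is correct and follows the classical ODE/Schwarzian dictionary; the computations you outline (in particular $(1/\sqrt{h'})'' = -\tfrac12\{h,z\}(h')^{-1/2}$ and the cancellation for $h/\sqrt{h'}$, and the $h''/h' = -2y_2'/y_2$ route for the converse) all check out. The paper itself does not prove this theorem but simply cites it from \cite{mathann}, so there is no in-paper argument to compare against; your direct verification is exactly the standard proof one would expect in that reference.
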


We apply the Frobenius method to the equation \eqref{4.2}  after a change of variable.

Let $q = e^{2 \pi i \tau / m}$. We look at the $q-$expansion of $F$ at the infinity cusp which we can write as
$$ F(q) = \sum_{n=0}^\infty a_nq^n,$$
where $a_n \in \mathbb{C}$ for all $n \in \mathbb{N}$ and $a_0 \ne 0$. Under this change of variable, the ODE \eqref{4.1} becomes
$$ \frac{d^2y}{dq^2} + \frac{1}{q}\frac{dy}{dq} - \frac{m^2}{8 \pi^2} 
\frac{F}{q^2} y = 0$$
defined over the punctured disc $\{q\in \mathbb{C} \mid 0 < |q| < 1\}.$
Since $a_0\neq 0$, we write  $a_0 = 2 \pi^2 \left(\frac{r}{m}\right)^2$ for some $r\in \mathbb{C}^*$ with positive real part. The above ODE  becomes
$$ \frac{d^2y}{dq^2} + \frac{1}{q}\frac{dy}{dq} - \frac{r^2}{4} 
\frac{\Tilde{F}}{q^2} y = 0, $$
which is a
Fuchsian differential equation with a regular singular point at $q=0$. The corresponding indicial equation is $x^2-\frac{r^2}{4} = 0$ which has roots $\pm r/2$. A first solution to the ODE is given by
$$
y_1 = q^{r/2}\sum_{n=0}^\infty \alpha_n q^n\, ,\ \alpha_n \in \mathbb{C}\, ,\ \alpha_0 \ne 0.
$$
The second (linearly independent with $y_1$) solution depends on whether $r$ is an integer or not.

If $r \notin \mathbb{Z}$, then we can take 
$$ y_2 = q^{-r/2}\sum_{n=0}^\infty \beta_n q^n \,,\ \beta_n \in \mathbb{C}\, ,\ \beta_0 \ne 0,$$

while if $r \in \mathbb{Z}$, then the second solution is not meromorphic at $q=0$ but it has a logarithmic singularity and is given by
$$ y_2 = c \, \text{log}(q) \, y_1(q) + q^{-r/2} \sum_{n=0}^\infty \beta'_n q^n,$$
where $c , \beta'_n \in \mathbb{C}$ and $c,\beta'_0 \ne 0$.

Having these linearly independent solutions, we can use \thmref{thm4.1} to deduce a general solution to \eqref{4.2}.

\begin{thm}\label{thm4.2}
    Let \( F = \sum_{n=0}^\infty a_n q^n \) be a weight 4 modular form such that \( a_0 = 2\pi^2 \left( \frac{r}{m} \right)^2 \), where \( r \in \mathbb{C}^* \) has positive real part. Consider the equation
    \[
    \{h, \tau\} = F.
    \]
    Then the following holds:

    \begin{enumerate}
        \item If \( r \notin \mathbb{Z} \), there exists a meromorphic solution \( h(\tau) \) of the form
        \[
        h(\tau) = q^r \sum_{n=0}^\infty a_n q^n \quad \text{with } a_0 \neq 0.
        \]
        \item If \( r \in \mathbb{Z} \), there exists a meromorphic solution \( h(\tau) \) of the form
        \[
        h(\tau) = \tau + q^{-r} \sum_{n=0}^\infty b_n q^n \quad \text{with } b_0 \neq 0. \]
        \end{enumerate}
            
    Furthermore, any other solution to the equation \( \{h, \tau\} = F \) can be expressed as a linear fractional transformation of \( h \).
    \end{thm}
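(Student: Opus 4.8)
The plan is to read $h$ off directly from the pair of linearly independent solutions of \eqref{4.2} produced by the Frobenius analysis just carried out, to rewrite them in the variable $\tau$, and then to bring them into the stated normal form using the $\GC$-invariance of the Schwarzian derivative (\propref{sch}(1)).

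I would first record two preliminary points. Because $F$ is holomorphic on $\fH$ and $\fH$ is simply connected, the local Frobenius solutions near the cusp $\infty$ extend to single-valued holomorphic functions $y_1,y_2$ on all of $\fH$, with Wronskian a nonzero constant (there is no first-order term in \eqref{4.2}); hence $y_1/y_2$ is a nonconstant meromorphic function on $\fH$ that is everywhere locally univalent, and by \thmref{thm4.1} (combined with \propref{sch}(1)) both $y_1/y_2$ and $y_2/y_1$ solve $\{h,\tau\}=F$. Moreover $q^{r/2}$ and $\sqrt q$ are single-valued on $\fH$ since $q=e^{2\pi i\tau/m}$, so every expression below is an honest function of $\tau$.

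For $r\notin\BZ$ I would take $h=y_1/y_2$: writing $y_1=q^{r/2}\sum\alpha_nq^n$, $y_2=q^{-r/2}\sum\beta_nq^n$ with $\alpha_0,\beta_0\neq0$, the power-series quotient $\big(\sum\alpha_nq^n\big)\big/\big(\sum\beta_nq^n\big)$ is holomorphic near $q=0$ with nonzero constant term, so $h=q^{r}\sum_{n\ge0}a_nq^n$ with $a_0\neq0$ and $q^{r}=e^{2\pi i r\tau/m}$; this is form (1), and no rescaling is needed. \textbf{The case $r\in\BZ$ is the one demanding care}, and I expect it to be the crux of the argument. Here $y_2=c\log(q)\,y_1+q^{-r/2}\sum\beta'_nq^n$ with $c\neq0$, and the key observation is that $\log q=2\pi i\tau/m$ is single-valued on $\fH$, so $y_2$ is holomorphic on $\fH$ even though it fails to be meromorphic at the puncture $q=0$. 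Dividing $y_2$ by $y_1$ and using $(\log q\cdot y_1)/y_1=\log q$ collapses the transcendental part to one linear term: $y_2/y_1=\frac{2\pi i c}{m}\,\tau+q^{-r}\sum_{n\ge0}\delta_nq^n$ with $\delta_0\neq0$. Since $c\neq0$, \propref{sch}(1) lets me replace $y_2/y_1$ by the scalar multiple $h:=\frac{m}{2\pi i c}\,(y_2/y_1)$, which has the same Schwarzian $F$ and the normalized shape $h=\tau+q^{-r}\sum b_nq^n$, $b_0\neq0$; this is form (2). The presence or absence of the logarithm is exactly what separates the two (mutually exclusive) normal forms.

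For the final assertion, if $g$ is any solution of $\{g,\tau\}=F$ then $\{g,\tau\}=\{h,\tau\}$, so by \propref{sch}(1) there is $\gamma\in\GC$ with $g=\gamma\cdot h$, i.e.\ $g$ is a linear fractional transformation of $h$. The genuinely delicate inputs sit upstream of this theorem rather than in it: that the logarithmic coefficient $c$ is nonzero whenever $r\in\BZ$ (so that form (2) is actually attained), and that the Frobenius series converge on the full punctured $q$-disc, so that $h$ is meromorphic on all of $\fH$ and not merely in a neighborhood of the cusp.
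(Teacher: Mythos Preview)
Your proposal is correct and follows precisely the route the paper intends: the paper does not give a formal proof of \thmref{thm4.2} but instead presents the Frobenius analysis immediately before the statement and declares the theorem as its consequence via \thmref{thm4.1}, which is exactly what you have made explicit. Your handling of the logarithmic case---collapsing $c\log(q)\,y_1/y_1$ to $\tfrac{2\pi i c}{m}\tau$ and then rescaling---and your use of \propref{sch}(1) for the final clause match the paper's implicit reasoning, with the added virtue that you flag the two genuinely delicate upstream inputs ($c\neq 0$ and convergence on the full disc) that the paper simply asserts.
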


In what follows, we will show that, in seeking a modular solution to \eqref{4.2}, the corresponding modular form $F$ must be non-vanishing at all cusps.
\begin{prop}\label{prop4.3}
    If $h$ is a modular function for a finite index subgroup $\G$ of $\SL$, then $F(\tau)=\{h,\tau\}$ does not vanish at any cusp of $\G$.
\end{prop}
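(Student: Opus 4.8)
The plan is to analyze the local behaviour of $h$ near a cusp of $\G$ and show that if $F = \{h,\tau\}$ vanished there, the resulting indicial exponents would force $h$ to fail to be meromorphic (in the local parameter) at the cusp, contradicting modularity. By \propref{conj} we may conjugate so that the cusp in question is $\infty$, and we may pass to $\ol\G$; after replacing $\G$ by a finite-index subgroup we may assume $\G$ contains $z \mapsto z + w$ for the cusp width $w$, so that $h$, being a modular function for $\G$, has a Laurent expansion $h(\tau) = \sum_{n \geq n_0} c_n p^n$ in $p = e^{2\pi i \tau / w}$ that is meromorphic at $p = 0$ (a pole of some finite order, or holomorphic). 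In particular $h$ is holomorphic and locally univalent on a punctured neighbourhood of the cusp, so $F = \{h,\tau\}$ is holomorphic there and has a $p$-expansion $F = \sum_{n \geq 0} a_n p^n$; write $a_0 = 2\pi^2 (r/w)^2$ as in Section 4 (allowing $r = 0$, which is precisely the vanishing case $a_0 = 0$).

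The key step is to identify $r$ from the expansion of $h$. Apply Theorem~\ref{thm4.1}: the pair $1/\sqrt{h'}$ and $h/\sqrt{h'}$ are linearly independent holomorphic solutions of \eqref{4.2}, hence after the change of variable $p = e^{2\pi i \tau/w}$ they are solutions of the Fuchsian equation at $p = 0$ studied in Section~4, whose indicial exponents are $\pm r/2$. Computing directly: if $h$ has a pole of order $N \geq 1$ at the cusp, then $h' = \frac{2\pi i}{w} p \frac{dh}{dp}$ has a pole of order $N$ in $\tau$-derivative sense — more carefully, $dh/dp \sim p^{-N-1}$, so $h' \sim p^{-N}$, giving $1/\sqrt{h'} \sim p^{N/2}$ and $h/\sqrt{h'} \sim p^{-N/2}$; thus the exponents are $\pm N/2$, forcing $r = N \in \BZ_{>0}$. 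If instead $h$ is holomorphic at the cusp with $h(\tau) \to c_{n_0}$, a parallel computation with the leading nonconstant term $c_{n_1} p^{n_1}$ ($n_1 \geq 1$) gives $h' \sim p^{n_1}$, exponents $\pm n_1/2$, so again $r = n_1 \in \BZ_{>0}$. In every case $r$ is a positive integer, hence $a_0 = 2\pi^2(r/w)^2 \neq 0$, i.e. $F$ does not vanish at the cusp.

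I expect the main obstacle to be the logarithmic case: when $r \in \BZ_{>0}$, Section~4 shows the second Frobenius solution generically carries a $\log p$ term, so I must argue that no genuine logarithm appears for the particular solutions coming from a single-valued $h$. The resolution is that single-valuedness of $h$ forces the monodromy around $p = 0$ to be (conjugate to) a diagonal matrix $\mathrm{diag}(\zeta, \zeta^{-1})$ rather than a Jordan block, so the obstruction $c$ in the formula $y_2 = c\log(q)y_1 + \dots$ vanishes; equivalently, the ratio $h = y_2/y_1$ of the two explicit local solutions I wrote above is manifestly meromorphic in $p$, so it is automatically the non-logarithmic case. A secondary point to handle cleanly is the reduction to $\G$ containing the full translation $\tau \mapsto \tau + w$: passing to a finite-index subgroup does not change whether $F$ vanishes at the cusp, and every cusp of $\G$ is $\G$-equivalent (hence, up to the conjugation from \propref{conj}, equivalent) to $\infty$, so no generality is lost. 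The remaining computations of leading exponents are routine.
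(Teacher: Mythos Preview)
Your argument is correct and is essentially a forward version of the paper's \emph{alternative} proof (the one via Frobenius theory), though the paper's \emph{primary} proof is shorter: using projective invariance of the Schwarzian, replace $h$ by a M\"obius transform so that its $q$-expansion begins $h=\sum_{n\ge n_0}a_nq^n$ with $n_0\ge 1$; then a direct computation (e.g.\ via the cocycle identity \eqref{cocycle} with $\omega=q$) yields $\{h,\tau\}=2\pi^2(n_0/m)^2+O(q)\ne 0$ at once. You instead read off the indicial exponents from the explicit solutions $1/\sqrt{h'}$, $h/\sqrt{h'}$, which is heavier but sound; the paper's alternative argument runs the same Frobenius analysis by contradiction (if $a_0=0$ the double indicial root $0$ forces a logarithmic solution, impossible since every solution is a M\"obius transform of the single-valued $h$), and this is precisely the content of your closing paragraph on the log obstruction.

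One small gap to patch: in the case where $h$ is holomorphic at the cusp with nonzero limit $c_{n_0}$, both $1/\sqrt{h'}$ and $h/\sqrt{h'}$ have the \emph{same} leading exponent $-n_1/2$ (since $h/\sqrt{h'}\sim c_{n_0}\,p^{-n_1/2}$), so that pair does not exhibit $+n_1/2$ directly. The fix is immediate---either first replace $h$ by $h-c_{n_0}$ (same Schwarzian), reducing to the vanishing case, or simply note that exhibiting \emph{one} nonzero indicial exponent already forces $r\ne 0$, since the roots of $x^2-r^2/4=0$ are $\pm r/2$.
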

\begin{proof}
    Let $h$ be a modular function for a finite index subgroup of $\G$, and let $m$ be the smallest positive integer such that $T^m\in\G$. Then $h$ admits a meromorphic $q-$expansion at $\infty$, where $q=\exp(2\pi i\tau/m)$. 
Since the Schwarz derivative is projectively invariant, we can assume that the $q-$expansion of $h$ has the form 
$$ h(\tau)=\sum_{n\geq n_0}\,a_n\,q^n,$$ 
where $n_0>0$.     
    Using \eqref{schwarz}, we find 
    $$\disp F(\tau)=\{h,\tau\}=2{\pi}^2 (n_0/m)^2 +\mbox{O}(q),$$
showing that $F$  is holomorphic and nonvanishing at $\infty$. This conclusion can also be drawn from the following reasoning: if $F$ were to vanish at $\infty$, then by applying the Frobenius method to the differential equation 
$\{h,\tau\}=F(\tau)$, we obtain an indicial equation given by $x^2 = 0$ which has a double root $0$. According to \thmref{thm4.2}, this would imply the existence of a solution $h_0$ with a logarithmic singularity at $\infty$ contradicting the fact that $h_0$ is a linear fractional transformation  of $h$. Consequently, $F$ cannot vanish at $\infty$. Next, suppose that  $F$ vanishes at a cusp $s$. Choose $\gamma\in\SL$ such that $\gamma\cdot s=\infty$. Then $g= f \circ \gamma$ is a modular function for the group $\gamma^{-1}\G\gamma$ and satisfies the equation $\{g , \tau \} = j(\gamma, \tau)^{-4} F(\gamma \tau)$, which vanishes at $\infty$, leading to a contradiction with the earlier conclusion.
\end{proof}

As an example, the equation 
$$ \{ h, \tau \} = c \, \theta_2^8$$
has no modular solutions for all values of $c \in \mathbb{C}$.
\smallskip

 For the remainder of this section, let $F$  denote a modular form of weight $4$ over $\Gamma_0(2)$ that does not vanish at the cusps, and $h$  be a meromorphic solution to $\{h,\tau\}=F$. From now on, let $q = e^{2 \pi i \tau}$ and $\rho$ represent the 2-dimensional complex representation induced by $h$ and by $\bar{\rho}$ its projectivization. 

\begin{thm}\label{thm4.4}
    If $\overline{\rho}$ has a finite image, then $h$ is a modular function on $\ker(\overline{\rho})$ with following $q$-expansions at $\infty$, up to fractional linear transformation,
    \begin{enumerate}
        \item At the cusp $\infty$:
        \[
        h(\tau)\,=\,q^{\frac{n_1}{m_1}} \sum_{n=0}^\infty a_nq^n\,\ \ a_i\in\BC\,\ a_0\neq 0,
        \]
        where $m_1$ is the cusp width at $\infty$, that is, the smallest positive integer $m_1$ such that $T^{m_1}\in \text{ker}(\overline{\rho})$, and $n_1$ is a positive integer with $\gcd(m_1,n_1)=1$.
        \item A the cusp 0: 
        \[
        h(-1/\tau)\,=\,q^{\frac{n_2}{m_2}} \sum_{n=0}^\infty b_nq^n\,\ \ a_i\in\BC\,\ b_0\neq 0,
        \]
        where $m_2$ is the cusp width at $0$, that is, the smallest positive integer $m_2$ such that $ST^{m_2}S^{-1}\in \text{ker}(\overline{\rho})$, and $n_2$ is a positive integer with $\gcd(m_2,n_2)=1$.
        \end{enumerate}
\end{thm}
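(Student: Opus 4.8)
My plan is to exploit the two ingredients already in place: the finiteness of $\overline{\rho}(\overline{\Gamma}_0(2))$, which forces $\ker(\overline{\rho})$ to be a finite-index subgroup, and \lemref{forum2}, which guarantees that $\ker(\overline{\rho})$ is torsion-free. First I would set $N=\ker(\overline{\rho})$ and observe that, since $N$ is torsion-free of finite index in $\overline{\Gamma}_0(2)$, it is a genus-$0$ or higher Fuchsian group whose only parabolic classes come from the cusps; in particular the stabilizer of $\infty$ in $N$ is generated by $T^{m_1}$ for the cusp width $m_1$, and the stabilizer of $0$ is generated by $ST^{m_2}S^{-1}$ for the cusp width $m_2$. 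The key point is that the $\rho$-equivariance of $h$ together with $\alpha\in N\Rightarrow\overline{\rho}(\alpha)=\mathrm{id}$ gives $h(\alpha\cdot\tau)=h(\tau)$ for all $\alpha\in N$ up to the ambiguity $\rho(\alpha)\in\BC^*I$ — but acting by linear fractional transformations kills the scalar, so $h$ is genuinely $N$-invariant. Hence $h$ descends to a meromorphic function on the modular curve $X_N$, and it remains only to check meromorphy at the cusps, i.e. to pin down the $q$-expansions.

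For the expansion at $\infty$: since $T^{m_1}\in N$, $h$ is invariant under $\tau\mapsto\tau+m_1$, so it has a Laurent-type expansion in the local parameter $q_{m_1}=e^{2\pi i\tau/m_1}$. To identify its leading exponent I would invoke \thmref{thm4.2} applied to the modular form $F=\{h,\tau\}$ on the cusp-width-$m_1$ group $N$: with $a_0=2\pi^2(r/m_1)^2$ the indicial roots are $\pm r/2$, and \thmref{thm4.2} produces a solution of the form $q^{r}(\sum a_nq^n)$ when $r\notin\BZ$ or $\tau+q^{-r}(\sum b_nq^n)$ when $r\in\BZ$. The second shape is manifestly \emph{not} invariant under any $\tau\mapsto\tau+m_1$ (it is not even periodic), and since every solution is a linear fractional transform of this $h$, no solution in the $r\in\BZ$ case can be $N$-invariant. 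Therefore finiteness of $\overline{\rho}$ forces $r=n_1/m_1\notin\BZ$, i.e. $m_1\nmid n_1$; writing $r=n_1/m_1$ in lowest terms gives $\gcd(m_1,n_1)=1$, and the leading behaviour is $q^{n_1/m_1}$ exactly as claimed. For the cusp $0$ I would apply \propref{conj} with $\lambda=S$: then $h\circ S$ is equivariant for the conjugate representation on $S^{-1}\Gamma_0(2)S=\Gamma^0(2)$, its kernel is $S^{-1}NS$, the stabilizer of $\infty$ there is generated by $ST^{m_2}S^{-1}$ conjugated back, and $j(S,\tau)^{-4}F(S\tau)$ is again a weight-$4$ form nonvanishing at $\infty$ (this is exactly the hypothesis "$F$ nonvanishing at all cusps," which via \propref{prop4.3} was built in). Repeating the indicial-equation argument verbatim yields $h(-1/\tau)=q^{n_2/m_2}\sum b_nq^n$ with $\gcd(m_2,n_2)=1$ and $b_0\neq 0$.

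The main obstacle I anticipate is the bookkeeping around the scalar ambiguity and the passage between $\rho$ and $\overline{\rho}$: one must be careful that "$h$ is $\overline{\rho}$-equivariant" is the right statement to use, that $\ker(\overline{\rho})$ (not $\ker(\rho)$) is the invariance group, and that the cusp-width integers $m_1,m_2$ are read off from $\overline{\Gamma}_0(2)$ rather than $\Gamma_0(2)$ — the two can differ by factors of $\pm I$, and the presentation $\overline{\Gamma}_0(2)\cong\BZ\ast\BZ_2$ recorded in Section 2 is what lets me control the parabolic stabilizers cleanly. A secondary subtlety is verifying that the leading coefficient $a_0$ (resp. $b_0$) is genuinely nonzero: this follows because $h$ is nonconstant and the Frobenius solution in \thmref{thm4.2} is normalized with $a_0\neq 0$, and any other solution differing by a linear fractional transformation either shares this leading term or is constant, which is excluded. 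Once these identifications are nailed down, holomorphy of $h$ on $\fH$ is automatic (it is a genuine meromorphic function there to begin with) and the statement that $h$ is a modular function on $\ker(\overline{\rho})$ — meromorphic on $\fH$ and at all cusps, invariant under the group — is complete.
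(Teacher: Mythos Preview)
Your proposal is correct and follows essentially the same approach as the paper's proof: establish $N$-invariance of $h$ directly from equivariance, then use the $m_1$-periodicity together with \thmref{thm4.2} to exclude the logarithmic branch and force the leading exponent to be $n_1/m_1$ with $\gcd(m_1,n_1)=1$, and finally treat the cusp $0$ by composing with $S$ via \propref{conj}. The paper's write-up is terser (it applies the Frobenius expansion in $q=e^{2\pi i\tau}$ rather than $q_{m_1}$, which avoids the slight bookkeeping tangle in your ``$r=n_1/m_1\notin\BZ$'' line), but the structure is identical.
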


\begin{proof}
It follows  immediately from the definition of a $\rho-$equivariant function that $h(\sigma \tau) = h(\tau) $
for all $\sigma \in \ker(\overline{\rho})$. We only  need to show  that it is meromorphic at the cusps. By definition of $m_1$, we have $h(\tau+m_1) = h(\tau)$ and this clearly holds for any linear fractional transformation of $h$. Therefore, by theorem 4.2, $h$  cannot have a logarithmic pole at $\infty$ and must take the form
$$ h(\tau) = q^r\sum_{n=0}^\infty a_n q^n,$$
for some $a_i,r \in \mathbb{C}$ such that $a_0 \ne 0$ and $\text{Re}(r) \geq 0$. By the periodicity of $h$, we conclude that $n_1 = m_1 r \in \mathbb{Z}_{\geq 1}$ and hence $r = n_1/m_1$ proving that $h$ is meromorphic at $\infty$.

As for the cusp at $0$, recall that $S\cdot 0 = \infty$. For $g := h \circ S$, we have 
$$ \{g , \tau \} = j(\gamma, \tau)^{-4} F(\gamma \tau). $$
By \propref{conj}, the right hand side of the equation is also a modular form of weight $4$ on $\Gamma^0(2)$. The remainder of the proof follows similarly to the case of the cusp at $\infty$.

\end{proof}

Conversely, if the $\rho-$equivariant function $h$ is a modular function for a finite index subgroup $\Gamma \leq \Gamma(1)$, then $\overline{\G}\leq \ker{\bar{\rho}}$, and therefore $\bar{\rho}$ has a finite image. 
%

Recall that the space $M_4(\Gamma_0(2))$ of weight $4$ modular forms on $\Gamma_0(2)$ is a $2$-dimensional vector space over $\mathbb{C}$, for which choose the following basis :
$$ M_4(\Gamma_0(2)) = \mathbb{C} \, \theta_2^8 \oplus \mathbb{C} \left( \theta_3 \theta_4 \right)^4.$$
For completeness, we also include the bases for the related spaces:
$$ M_4(\Gamma^0(2)) = \mathbb{C} \, \theta_4^8 \oplus \mathbb{C} \left( \theta_2 \theta_3 \right)^4 \; \text { and } 
\; M_4(\Gamma_\theta(2)) = \mathbb{C} \, \theta_3^8 \oplus \mathbb{C} \left( \theta_2 \theta_4 \right)^4.$$

Equation \eqref{4.1} can now be expressed as
\begin{equation}\label{4.3}
    \{h,\tau\} =  a \left( \theta_3 \theta_4 \right)^4 + b \left(\theta_2\right)^8 ,
\end{equation}
where $a,b \in \mathbb{C}$.
\begin{thm}\label{thm5.1}
If the Schwarzian equation \eqref{4.3} admits a modular function as a solution, then 
    $$
    b = 2 \pi^2 \left(\frac{n_1}{m_1}\right)^2 \; \; \; \text{ and } \; \; \; a = 2 \pi^2 \left(\frac{n_2}{m_2}\right)^2,
    $$
    where $n_1$, $n_2$, $m_1$ and $m_2$ are positive integers  defined as in \thmref{thm4.4}. 
\end{thm}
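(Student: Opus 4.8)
The plan is to evaluate the Schwarzian equation \eqref{4.3} at each of the two cusps of $\Gamma_0(2)$ and read off the leading coefficient, then match it against the $q$-expansion of the modular solution $h$ produced by \thmref{thm4.4}. First I would recall from \thmref{thm4.4} that, if \eqref{4.3} admits a modular solution $h$, then $\overline{\rho}$ has finite image and $h$ has the $q$-expansion $h(\tau) = q^{n_1/m_1}\sum_{n\geq 0} a_n q^n$ at $\infty$ (with $a_0 \neq 0$), and $h(-1/\tau) = q^{n_2/m_2}\sum_{n\geq 0} b_n q^n$ at $0$. A direct computation of the Schwarzian derivative of a function of the form $q^{s}(a_0 + a_1 q + \cdots)$, using $q = e^{2\pi i \tau}$ and $\frac{d}{d\tau} = 2\pi i\, q \frac{d}{dq}$, gives $\{h,\tau\} = 2\pi^2 s^2 + O(q)$; this is exactly the computation already carried out in the proof of \propref{prop4.3}. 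Hence the constant term of $\{h,\tau\}$ at $\infty$ equals $2\pi^2 (n_1/m_1)^2$.

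Next I would compute the constant term at $\infty$ of the right-hand side of \eqref{4.3}. Using the product expansions $\theta_2 = 2q^{1/8}\prod(1-q^n)(1+q^n)^2$, $\theta_3 = \prod(1-q^n)(1+q^{n-1/2})^2$, $\theta_4 = \prod(1-q^n)(1-q^{n-1/2})^2$ recorded in the preliminaries, one sees that $\theta_2^8 = O(q)$ vanishes at $\infty$, while $(\theta_3\theta_4)^4 = 1 + O(q)$ has constant term $1$. Therefore the constant term of $a(\theta_3\theta_4)^4 + b\,\theta_2^8$ at $\infty$ is $a$. Comparing with the previous paragraph forces $a = 2\pi^2(n_1/m_1)^2$. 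Wait — the statement pairs $b$ with $m_1$ and $a$ with $m_2$, so I should be careful to match the convention of \thmref{thm4.4}: there the cusp width at $\infty$ is $m_1$, and one checks that the form vanishing at $\infty$ among the chosen basis is $\theta_2^8$ (whose coefficient in \eqref{4.3} is $b$) while $(\theta_3\theta_4)^4$ (coefficient $a$) is the one that survives. Re-reading \thmref{thm5.1}: it asserts $b = 2\pi^2(n_1/m_1)^2$. To get that, I need the constant term at $\infty$ to be $b$, which means I should expand in the variable appropriate to the cusp width — but since $\theta_2^8 = O(q)$ regardless, the constant term at $\infty$ really is $a$, not $b$; so the correct reading must be that the roles of $\theta_2^8$ and $(\theta_3\theta_4)^4$ at the two cusps are swapped relative to my first guess. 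The clean way to avoid this bookkeeping trap is: $\theta_2^8$ vanishes at $\infty$ and $(\theta_3\theta_4)^4$ vanishes at $0$ (by the stated feature of the basis), so the constant term of the RHS at $\infty$ is $a$ and at $0$ is $b$ — and then matching $a$ with the $\infty$-data $(n_1,m_1)$ and $b$ with the $0$-data $(n_2,m_2)$, while the theorem's indices $(n_1,m_1)$, $(n_2,m_2)$ are labelled by which coefficient they constrain rather than by which cusp. I would simply follow the index conventions of \thmref{thm4.4} literally and let the vanishing behaviour of the two basis forms dictate the pairing.

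For the cusp at $0$, I would apply \propref{conj} (equivalently the argument in the proof of \thmref{thm4.4}): setting $g = h\circ S$, we have $\{g,\tau\} = j(S,\tau)^{-4} F(S\tau) = \tau^{-4} F(-1/\tau)$, which by the transformation laws $(\theta_2,\theta_3,\theta_4)\circ S = \sqrt{-i\tau}(\theta_4,\theta_3,\theta_2)$ becomes $a\,\theta_2^8 + b\,(\theta_3\theta_4)^4$ expanded at $\infty$; its constant term is now $b$, since $\theta_2^8$ vanishes at $\infty$ and $(\theta_3\theta_4)^4 \to 1$. On the other hand $g = h\circ S$ has $q$-expansion $q^{n_2/m_2}\sum b_n q^n$ by \thmref{thm4.4}, so $\{g,\tau\} = 2\pi^2(n_2/m_2)^2 + O(q)$. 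Equating gives $b = 2\pi^2(n_2/m_2)^2$, completing the proof.

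The only genuine subtlety — and the main thing to get right rather than a real obstacle — is the bookkeeping of which basis element is paired with which cusp and hence with which pair of integers $(n_i,m_i)$; the analytic content (a one-line Schwarzian computation plus the known vanishing orders of $\theta_2^8$ and $(\theta_3\theta_4)^4$ at the two cusps) is entirely routine and already essentially appears in \propref{prop4.3}. I would state the vanishing facts explicitly, fix the index convention to agree with \thmref{thm4.4}, and present the two cusp computations in parallel.
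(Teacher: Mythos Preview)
Your approach is exactly the paper's: compute the leading term of $\{h,\tau\}$ at $\infty$ from the Frobenius $q$-expansion of $h$, match it against the constant term of the right-hand side, then repeat at the cusp $0$ via $h\circ S$ and the theta transformation laws. Two small remarks: the $S$-transform of the right-hand side is $a(\theta_2\theta_3)^4 + b\,\theta_4^8$ (not $a\,\theta_2^8 + b(\theta_3\theta_4)^4$ as you wrote), although your conclusion that its constant term is $b$ is still correct; and your bookkeeping anxiety is warranted---the statement of the theorem pairs $b$ with $(n_1,m_1)$ and $a$ with $(n_2,m_2)$, whereas the paper's own proof, working from equation~\eqref{4.3}, actually derives $a = 2\pi^2(n_1/m_1)^2$ and $b = 2\pi^2(n_2/m_2)^2$, so the apparent mismatch is a labeling inconsistency in the paper rather than an error on your part.
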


\begin{proof}
    First, we note that \propref{prop4.3} implies that $a$ and $b$ are both non-zero, as $\{h,\tau\}$ takes the value $a$ at the cusp $\infty$ and $b$  at the cusp  $0$. 
    By \thmref{thm4.2}, there exists relatively prime positive integers $m_1,n_1$  such that $h$ is a linear fractional transformation of
    \[
        q^{\frac{n_1}{m_1}}\sum_{i=0}^\infty a_i q^i.
    \]
    Using the cocycle relation \eqref{cocycle} with $w=q$ and noting that  $\{q, \tau \} = 4 \pi^2$, we obtain
\[
        \{ h, \tau\} = 2 \pi^2 \left( \frac{n_1}{m_1}\right)^2 + \mbox{O}\left(q^{\frac{n_1}{m_1}}\right). 
\]
    Now, since  $ \{h,\tau\} =  a \left( \theta_3 \theta_4 \right)^4 + b \left(\theta_2\right)^8 = a +  \mbox{O}(q)$, we conclude that 
    $$
    a= 2 \pi^2 \left( \frac{n_1}{m_1}\right)^2.
    $$ 
    Moreover, $h$ is also meromorphic at $0$ and therefore $h \circ S $ takes the form of a linear fractional transformation of
  \[
         q^{\frac{n_2}{m_2}}\sum_{i=0}^\infty b_i q^i,
    \]
    whose Schwarz derivative has the form 
\[
        \{ h\circ S, \tau\} = 2 \pi^2 \left( \frac{n_2}{m_2}\right)^2 + \mbox{O}\left(q^{\frac{n_2}{m_2}}\right). 
\]
  In the meantime, using the transformation rules for the theta functions from Section 2,  we find
    \begin{align*}
        \{h \circ S, \tau\} &= \tau^{-4} \left[ a \left( \theta_3 \theta_4 \right )^4 (S \tau) + b  \left(\theta_2\right)^8   (S \tau)\right]  \\
        &=    a \left( \theta_3 \theta_2 \right )^4  + b  \left(\theta_4\right)^8 \\
        &= b + \mbox{O}(q).
    \end{align*}
    Thus we conclude that 
    $$
    b = 2 \pi^2 \left( \frac{n_2}{m_2}\right)^2 .
    $$
\end{proof}


\section{Finite image Representations  of $\Gamma_0(2)$}

We will study the modular solutions to the equation \eqref{4.3} by classifying them according to their respective finite image projective representations. Finite subgroups of $\GC$ are well known and they are the binary polyhedral groups. Their projection into $\PG$ is easier to work with.

\begin{prop}\cite{klein}\label{klein}
    The finite subgroups of $\PG$ are $A_5$, $S_4$, $A_4$, $D_{2n}$ for $n \geq 2$ and $C_n$ for $n \geq 1$. Moreover, for each of these groups, there is only one conjugacy class.
\end{prop}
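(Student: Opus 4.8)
The statement is the classical classification of finite subgroups of $\mathrm{PSL}_2(\mathbb{C})\cong\mathrm{PGL}_2(\mathbb{C})$ acting on $\PC$ by Möbius transformations. The plan is to reduce to the well-known classification of finite subgroups of $\mathrm{SO}(3)$ (equivalently $\mathrm{PSU}(2)$) by an averaging argument, then enumerate. Let me sketch it.

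First I would show that every finite subgroup $G\le\PG$ is conjugate (in $\PG$) to a subgroup of the rotation group $\mathrm{PSU}(2)$. To do this, lift $G$ to a finite subgroup $\widetilde G$ of $\GC$ (this is harmless since one only needs a set-theoretic lift together with a central correction; more cleanly, work with $\mathrm{SL}_2(\BC)$ and the preimage of $G$, which is finite because the kernel $\{\pm I\}$ is finite). Averaging the standard Hermitian form over $\widetilde G$ produces a $\widetilde G$-invariant positive-definite Hermitian form $H=\frac{1}{|\widetilde G|}\sum_{g\in\widetilde G} g^\ast g$; diagonalizing $H$ by some $\sigma\in\GC$ shows $\sigma\widetilde G\sigma^{-1}$ preserves the standard form, hence lies in $\mathrm{U}(2)$, so $G$ is conjugate into $\mathrm{PU}(2)=\mathrm{PSU}(2)\cong\mathrm{SO}(3)$.

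Next I would invoke (or reprove) the classification of finite subgroups of $\mathrm{SO}(3)$: the cyclic groups $C_n$ ($n\ge1$), the dihedral groups $D_{2n}$ ($n\ge2$), and the rotation groups of the platonic solids, namely $A_4$ (tetrahedron), $S_4$ (cube/octahedron), and $A_5$ (dodecahedron/icosahedron). The standard proof counts, for a finite group $G$ acting on the sphere $S^2$, the poles (fixed points of nontrivial elements) via Burnside/orbit-counting: the equation $2(1-1/|G|)=\sum_{\text{orbits of poles}}(1-1/e_i)$ has only finitely many solutions in integers, and each solution is realized by exactly the groups listed. I would either present this Burnside-counting argument in a few lines or simply cite it, since the paper already cites \cite{klein}.

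Finally, for the uniqueness of the conjugacy class of each type: two finite subgroups of $\PG$ of the same isomorphism type from this list are conjugate. For $C_n$ this is because any element of order $n$ in $\PG$ is conjugate to $z\mapsto\zeta_n z$ (diagonalization of a lift with distinct eigenvalues). For the remaining groups one argues that the conjugacy class is determined by the combinatorial data of the pole orbits with their stabilizer orders, which is rigid; concretely, a generating pair can be normalized by a Möbius transformation so that two specified poles go to $0$ and $\infty$ and a third to $1$, after which the group is pinned down. The main obstacle is exactly this last rigidity/uniqueness step — ruling out exotic embeddings and showing the normalization is always possible requires some care, whereas the existence list and the conjugacy-into-$\mathrm{SO}(3)$ reduction are routine. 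Given that the paper cites \cite{klein}, I would keep the proof short: give the averaging reduction explicitly, then state that the enumeration and the single-conjugacy-class assertion are Klein's classical theorem.
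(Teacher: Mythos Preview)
The paper does not prove this proposition at all: it is stated with a citation to Klein \cite{klein} and used as a black box. Your sketch is the standard argument and is correct --- the averaging reduction to $\mathrm{PSU}(2)\cong\mathrm{SO}(3)$, the pole-counting classification there, and the normalization of poles for uniqueness are all sound --- so your proposal actually supplies more than the paper does; your closing suggestion to keep it brief and defer to Klein is exactly what the paper itself does.
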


Thus, if $h$ is a modular solution to \eqref{4.3}, the image of its associated projective representation must be isomorphic to one of the groups listed in the above proposition. However, not all of these groups  necessarily correspond to solutions of \eqref{4.3}. Indeed, when $n$ is odd, suppose there exists a solution $h$ whose corresponding projective representation is $\overline{\rho}: \overline{\Gamma}_0(2) \to C_n$. Since $RT^{-1}$ has order $2$ in $\overline{\Gamma}_0(2)$, its image must have order $1$, implying that $RT^{-1} \in \ker{\overline{\rho}}$. This, however, is impossible because the kernel must be torsion-free by \lemref{forum2}. For the remaining groups, we will compute all the possible kernels of their corresponding projective representation.

To proceed, we introduce some notation. Let $G$ be a group and $H$ a subgroup. Denote by $G'$ the commutator subgroup of $G$ and by $N_G(H)$ the normal closure of $H$ in $G$, i.e., the smallest normal subgroup of $G$ containing $N$.

\begin{thm}\label{groups}
    Let $\rho : \overline{\Gamma}_0(2) \to \PG$ be a finite image projective representation with torsion-free kernel. 
   Then its image $G$ and its kernel  $\G$ fall into one of the following cases :

    \begin{enumerate}
        \item $G=A_4$ and $\Gamma =  N_{\overline{\Gamma}_0(2)}(T^3, R^3) = \Gamma_0(2) \cap \Gamma(3)$.
         \item $G=S_4$ and either $\Gamma= N_{\overline{\Gamma}_0(2)}(T^4, R^3)$ or $\Gamma = N_{\overline{\Gamma}_0(2)}(T^3, R^4)$.
        \item $G=A_5$ and either $\Gamma = N_{\overline{\Gamma}_0(2)}(T^5, R^3)$ or $\Gamma = N_{\overline{\Gamma}_0(2)}(T^3, R^5)$.
    
        \item $G=D_{2n}$ and either $\Gamma = N_{\overline{\Gamma}_0(2)}(T^n, R^2)$ or  $\Gamma = N_{\overline{\Gamma}_0(2)}(T^2, R^n)$.
        \item $G=C_{2n}$ for $n$ even integer and 
        $$\Gamma = \left\langle T^{2n}, R^{2n}, T^{n+1}R^{-1}, \Gamma_0(2)' \right\rangle.$$
        \item $G=C_{2n}$ for $n$ odd integer and either
        $$\Gamma = \left\langle T^{n}, R^{2n}, R^{n+1}T^{-1}, \Gamma_0(2)' \right\rangle \text{ or } \Gamma = \left\langle T^{2n}, R^{n}, T^{n+1}R^{-1}, \Gamma_0(2)' \right\rangle.$$
    \end{enumerate}

\end{thm}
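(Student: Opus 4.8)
The plan is to classify the finite-image projective representations $\overline{\rho}:\overline{\Gamma}_0(2)\to\PG$ with torsion-free kernel by exploiting the free-product structure $\overline{\Gamma}_0(2)\cong \BZ\ast\BZ_2=\langle a,b\mid b^2=1\rangle$ with $(a,b)\mapsto(T,RT^{-1})$. Giving such a representation is the same as choosing the images $A=\overline{\rho}(a)$ of infinite order... wait, $A$ can have any order, and $B=\overline{\rho}(b)$ with $B^2=1$. Since the image $G$ must be one of the groups in \propref{klein}, and the kernel must be torsion-free, the torsion elements of $\overline{\Gamma}_0(2)$ — which are exactly the conjugates of $b=RT^{-1}$ (order $2$) and, inside a finite quotient, the elements mapping to torsion — must survive in $G$. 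I would first record which of the Klein groups can even occur: as already noted in the excerpt, $C_n$ for $n$ odd is impossible because $b$ would have to die. I would then handle each remaining type ($A_4,S_4,A_5,D_{2n},C_{2n}$) in turn.

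**Key steps.** (1) \emph{Reformulate via generators.} A homomorphism $\overline{\Gamma}_0(2)\to G$ is determined by a pair $(A,B)\in G\times G$ with $B^2=1$ and $\langle A,B\rangle=G$ (surjectivity); the kernel is torsion-free iff no nontrivial power of a conjugate of $b$ lies in the kernel, i.e. iff $B\neq 1$, together with the condition that elements of $\overline{\Gamma}_0(2)$ of finite order not already accounted for also map injectively — but in the free product $\BZ\ast\BZ_2$ the only torsion is conjugates of $b$, so the single condition $B\neq 1$ (equivalently $B$ has order $2$) suffices for torsion-freeness of $\ker$. (2) \emph{Translate the kernel.} For each surjection, identify $\ker\overline{\rho}$ as a normal subgroup of $\overline{\Gamma}_0(2)$; the natural candidates are normal closures of powers of $T$ and $R$ (note $T=a$ and $R=b a b^{-1}\cdot a$... more precisely $RT^{-1}=b$ so $R=ba$, hence powers of $R$ and $T$ generate a convenient set of relators), which is why the answer is phrased via $N_{\overline{\Gamma}_0(2)}(T^i,R^j)$. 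Concretely: since $A$ has some order $p$ and $BAB$ (a conjugate giving $R=ba$, so $\overline\rho(R)=BA$) has some order $p'$, the relators $T^p$ and $R^{p'}$ generate the kernel, and one checks no smaller relators are needed using the presentations of $A_4,S_4,A_5,D_{2n}$ as quotients of $\BZ\ast\BZ_2$ (triangle-group / von Dyck presentations $\langle x,y\mid x^2=y^p=(xy)^{p'}=1\rangle$). (3) \emph{Enumerate the pairs $(p,p')$.} For $A_4=\langle x,y\mid x^2=y^3=(xy)^3=1\rangle$ the only possibility (up to swapping roles of $T,R$, but here $a$ and $b$ are not symmetric, yet $T$ and $R$ both map to order-3 elements) is $(3,3)$; for $S_4$, $(4,3)$ or $(3,4)$; for $A_5$, $(5,3)$ or $(3,5)$ (the pair $(3,3)$ gives $A_4\subsetneq A_5$, not onto, and $(5,5)$, $(2,\cdot)$ etc. fail surjectivity or torsion); for $D_{2n}$, one generator is a reflection-type order-2 rotation giving $(n,2)$ or $(2,n)$; the cyclic cases $C_{2n}$ require $B$ to be the unique involution so $\overline\rho(R T^{-1})$ is that involution, and writing $A=$ generator, $B=A^n$, one extracts the extra relator $T^{n+1}R^{-1}$ (or $R^{n+1}T^{-1}$), together with $\Gamma_0(2)'$ since the image is abelian. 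The parity split ($n$ even vs. odd) is exactly because when $n$ is odd one may route the relation through either $T$ or $R$, whereas when $n$ is even only one routing yields the involution as $A^n$. (4) \emph{Identify two of the kernels explicitly} — namely $N_{\overline{\Gamma}_0(2)}(T^3,R^3)=\overline{\Gamma_0(2)\cap\Gamma(3)}$ — by a direct index/congruence computation: $\overline{\Gamma}_0(2)/N(T^3,R^3)\cong A_4$ has order $12$, and $\Gamma_0(2)\cap\Gamma(3)$ has index $12$ in $\Gamma_0(2)$ by the standard index formula, so the two coincide.

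**Main obstacle.** I expect the delicate part to be step (3)+(2) done carefully: proving that for each target group the kernel is \emph{exactly} the normal closure of the two listed relators (no extra relations are needed), which amounts to knowing that the von Dyck presentations $\langle x,y\mid x^2,y^p,(xy)^{p'}\rangle$ are \emph{complete} presentations of $A_4,S_4,A_5,D_{2n}$ — a classical but slightly fiddly fact — and that the chosen generators $(A,B)=(\overline\rho(T),\overline\rho(RT^{-1}))$ can always be taken in this "von Dyck" position up to automorphism of $G$ (using that each Klein group has a single conjugacy class, \propref{klein}). The cyclic cases need separate care since $\langle x,y\mid x^2,x=y^{\pm(n+1)}y^{\mp n}\dots\rangle$-type presentations of $C_{2n}$ are not von Dyck; there one argues directly that any surjection $\BZ\ast\BZ_2\to C_{2n}$ sending $b$ to the involution factors through the abelianization modulo the explicit extra relator, and the parity of $n$ controls whether $T$ or $R$ (or both) can carry the "index $2n$" part. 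Finally, one must check torsion-freeness of the cyclic kernels by hand, since they are not phrased as normal closures of powers — but this follows because any torsion in $\overline{\Gamma}_0(2)$ is conjugate to $b$, whose image is the nontrivial involution, hence not in the kernel.
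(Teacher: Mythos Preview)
Your approach is essentially the paper's: exploit $\overline{\Gamma}_0(2)\cong\langle a,b\mid b^2=1\rangle$ with $(a,b)=(T,RT^{-1})$, note that torsion-freeness of the kernel forces $\overline\rho(RT^{-1})$ to have order exactly $2$, then for each target $G$ determine the admissible orders $(p,p')$ of $\overline\rho(T)$ and $\overline\rho(R)=\overline\rho(RT^{-1})\,\overline\rho(T)$ and identify the kernel with $N_{\overline{\Gamma}_0(2)}(T^{p},R^{p'})$ via the von Dyck presentation of $G$. The cyclic cases are handled additively through the abelianization, just as in the paper, and your identification $N(T^3,R^3)=\overline{\Gamma_0(2)\cap\Gamma(3)}$ by an index count is also what the paper does. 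The paper treats $A_4$, $S_4$, and $C_{2n}$ in detail and dispatches $A_5$ and $D_{2n}$ with ``similar arguments''.

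There is, however, a concrete gap in your $A_5$ enumeration. You assert that the type $(5,5)$ ``fails surjectivity or torsion'', but this is false: with $A=(1\,2\,3\,4\,5)$ and $B=(1\,3)(2\,4)$ in $A_5$ one has $B^2=1$ and $BA=(1\,4\,5\,3\,2)$ of order $5$, and $\langle A,B\rangle$ cannot be $D_{10}$ (there a reflection times a rotation is again a reflection, forcing $(BA)^2=1$), so $\langle A,B\rangle=A_5$. This yields a surjection $\overline{\Gamma}_0(2)\twoheadrightarrow A_5$ with torsion-free kernel in which both $\overline\rho(T)$ and $\overline\rho(R)$ have order $5$; the kernel strictly contains $N_{\overline{\Gamma}_0(2)}(T^5,R^5)$ (the $(2,5,5)$ triangle group is infinite) and is distinct from the two kernels listed for $A_5$, since neither $T^3$ nor $R^3$ lies in it. The paper's ``similar arguments'' do not address this case either, so the lacuna is shared; but your explicit dismissal of $(5,5)$ is precisely where your outline breaks, and it would need either a further argument ruling this case out or an amendment to the statement.
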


\smallskip

\begin{proof}
(1) Let $\rho : \overline{\Gamma}_0(2) \twoheadrightarrow A_4$ be a surjective group homomorphism. Observe that $\rho(T)$ and $\rho(RT^{-1})$ generate $A_4$. Since $RT^{-1}$ is an elliptic element with order $2$,  $\rho(RT^{-1})\neq  1$ because $\text{ker}(\rho)$ is torsion-free. Thus,  $\rho(RT^{-1})$ has order $2$ and represents the product of two disjoint 2-cycles. Consequently, $\rho(T)$ must be a 3-cycle, as $A_4$ cannot be generated by two elements of order $2$. Moreover, in $A_4$, the product of a 3-cycle with the product of two disjoint two cycles is itself a 3-cycle, implying that $\rho(R)$ has order $3$ in $A_4$. Thus, we have
 $$ N_{\overline{\Gamma}_0(2)} (T^3,R^3) \subseteq \text{ker}(\rho).$$
To establish that this inclusion is actually an equality, it suffices to show that 
$$ \overline{\Gamma}_0(2) / N_{\overline{\Gamma}_0(2)} (T^3,R^3) \cong A_4. $$
This follows directly, as   under the isomorphism \eqref{isom}, we find
$$
\overline{\Gamma}_0(2) / N_{\overline{\Gamma}_0(2)} (T^3,R^3) 
\, \cong \, \left< a,b \mid b^2=1 \right> / N(a^3, (ba)^3),
$$
and $ A_4$ admits the presentation $\left<a,b \mid a^3=b^2=(ba)^3=1\right>$.

Additionally, recall that $\overline{\Gamma}(3)$ is a normal subgroup of $\overline{\Gamma}(1)$, and so $\overline{\Gamma}(3) \cap \overline{\Gamma}_0(2)$ is a normal subgroup of $\overline{\Gamma}_0(2)$ that clearly  contains $T^3$ and $R^3$. Therefore, we conclude $\text{ker}(\rho) \subseteq \overline{\Gamma}(3) \cap \overline{\Gamma}_0(2)$. The desired equality follows from the fact that 
$$ \overline{\Gamma}_0(2)/ \overline{\Gamma}(3) \cap \overline{\Gamma}_0(2) \cong \overline{\Gamma}_0(2) \overline{\Gamma}(3)/ \overline{\Gamma}(3) \cong \overline{\Gamma}(1)/\overline{\Gamma}(3) \cong A_4$$
where the last isomorphism can be found in \cite{mason2}.

(2) To ensure that a group homomorphism  $\rho : \overline{\Gamma}_0(2) \twoheadrightarrow S_4$ is surjective, it is necessary for $\rho(T)$ to have order $3$ or $4$. We will examine each case separately. Note, however, that in both cases, $\rho(RT^{-1})$ must have order $2$.

If $\rho(T)$ has order $3$, then  $\rho(R) = \rho(RT^{-1}) \rho(T)$ has order $4$. This is because permutations of order $3$ in $S_4$ are 3-cycles, which are even permutations. Therefore,  $\rho(RT^{-1})$ must be an odd permutation of order $2$, i.e., a transposition, as the two generators cannot both have an even signature. On the other hand, if a 2-cycle and a 3-cycle generate $S_4$,  their product is always a 4-cycle. Thus,  $\rho(R)$ has order $4$ leading to the inclusion
$$
N_{\overline{\Gamma}_0(2)}(T^3,R^4) \subseteq \text{ker}(\rho). 
$$ 
To establish the equality, we use the same reasoning as in the $A_4$ case using the fact that 
$S_4$ has the presentation 
$$
\left<a,b \mid a^4 = b^2 = (ba)^3 =1\right>.
$$
The case where $\rho(T)$ has order $4$ follows similarly.

(3) and (4): The $A_5$  and  $D_{2n}$ cases can be shown using similar arguments as above.

(5) Now suppose that $\rho : \overline{\Gamma}_0(2) \twoheadrightarrow C_{2n}$ is surjective, where $n$ is even. Since $C_{2n}$ is abelian, the representation $\overline{\rho}$ factors through the commutator subgroup $\overline{\Gamma}_0(2)'$. We look at $C_{2n}$ as an additive group, with  the order 2 element $\rho(RT^{-1})$ corresponding to the residue class of $n$ in $C_{2n}$. Now, let $a = \rho(T)$. For $\rho$ to be surjective, we must have $\text{gcd}(a,n) = 1$. Given that $n$ is even, it follows that  $\text{gcd}(a,2n) = 1$ as well. Thus,  $a$ has order $2n$ in $C_{2n}$ and $\rho(T)^n=\rho(RT^{-1})$. We conclude that 
$$ \Gamma := \left<\overline{\Gamma}_0(2)', T^{2n}, T^{n+1}R^{-1}\right> \, \subseteq \,\ker{\rho}.$$
 All  elements of $\overline{\Gamma}_0(2)/\Gamma$ can be expressed powers of $T$, meaning the quotient has at most $2n$ elements. The above inclusion is therefore an equality.

(6) Let $\rho: \overline{\Gamma}_0(2) \to C_{2n}$ be a surjective homomorphism with   odd $n$. Set $a := \rho(T)$. Since $\rho$ is surjective, we must have $\text{gcd}(a,n)=1$; otherwise, the order of $a$ would be less than $n$, preventing $a$ and $\rho(RT^{-1})$ from generating $C_{2n}$, which has an element of order $2n$. 

There are two cases to consider. If $a$ is odd, then $\text{gcd}(a,2n) = 1$ and $\rho(T)$ has order $2n$. We also have $\rho(R) =\rho(RT^{-1})+\rho(T)= n + a$ and one can verify that $\text{gcd}(a+n,2n) = 2$, implying that $\rho(R)$ has order $n$. This leads us to the inclusion 
$$ \Gamma := \left<\overline{\Gamma}_0(2)', T^{2n},R^n, T^{n+1}R^{-1}\right> \, \subseteq \,\ker{\overline{\rho}}.$$   We show that the above inclusion is an equality using the same reasoning as in the case of $n$ even.

If $a$ is even, then a similar argument shows that
$$ \ker{\overline{\rho}} =  \left<\overline{\Gamma}_0(2)', T^{2n},R^n, T^{n+1}R^{-1}\right>.$$
  
\end{proof}

\begin{remark}{\em
The reason that $\ker{\overline{\rho}}$ has two possible forms in some cases is as follows:
The Fricke involution $\begin{pmatrix}
0 & 1/\sqrt{2} \\
-\sqrt{2} & 0 
\end{pmatrix} \in \SR$ normalizes $\G_0(2)$ and it is transitive on the cusps and at the same time conjugates $T$ to $R^{-1}$. If  $\ker{\overline{\rho}}$ has only one possible form, it would be fixed by the action of this involution.
}
\end{remark}

For the remainder of this section, we will study each subgroup $\G$ of $\overline{\Gamma}_0(2)$ that appears in \thmref{groups}  in more details. In particular, we will compute the genus of their corresponding modular curve $X(\G)=\G\backslash (\fH\cup \mathbb P_1(\BQ))$.

Now with ${\Gamma}$ being a finite index subgroup of ${\Gamma}(1)$, the  inclusion ${\Gamma} \subseteq {\Gamma}(1) $ induces a natural finite covering of Riemann surfaces 
$$ X\left(\Gamma \right) \to X\left(\Gamma(1) \right).$$
Using  the Riemann-Hurwitz formula for this covering, the genus of $X\left(\Gamma \right)$ is given by
$$ g = 1 + \frac{\mu}{12} - \frac{e_2}{4} - \frac{e_3}{3} - \frac{v}{2},$$
where $\mu = \left[ \overline{\Gamma}(1) : \overline{\Gamma}\right]$, $e_k$, $k=1$ or $2$, is the number of $\overline{\Gamma}-$inequivalent elliptic fixed points of order $k$, and $v$ is the number of  $\Gamma-$inequivalent cusps. Since ${\Gamma}$ is torsion-free, the formula simplifies to
\begin{equation}\label{genus}
g = 1 + \frac{\mu}{12} - \frac{1}{2}v.
\end{equation}
In order to determine $v$, we use the orbit-stabilizer theorem for infinite groups.
\begin{prop}\cite{shimura}
     Let $G$ be a group acting on a set $S$ transitively and let $\G$ be a finite index subgroup of $G$. Then, for all $x \in S,$ the stabilizer of $x$ in $\G$ has a finite index in the stabilizer of $x$ in $G$. Moreover,
    $$ [G:\G] = \sum_{x \in \G \backslash S} \left[ \text{Stab}_G(x): \text{Stab}_{\G}(x) \right].$$
\end{prop}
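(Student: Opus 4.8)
The plan is to run the standard dictionary relating a transitive $G$-set to coset and double-coset spaces. Fix a base point $x_0\in S$ and put $H:=\text{Stab}_G(x_0)$. Transitivity gives a $G$-equivariant bijection $G/H\xrightarrow{\ \sim\ }S$, $gH\mapsto g\cdot x_0$, under which the $\Gamma$-action on $S$ becomes left translation by $\Gamma$ on $G/H$; hence the set $\Gamma\backslash S$ of $\Gamma$-orbits is in bijection with the double coset space $\Gamma\backslash G/H$ via $\Gamma gH\mapsto \Gamma\cdot(g\cdot x_0)$.

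For the finiteness assertion, observe that for any $x\in S$ one has $\text{Stab}_\Gamma(x)=\Gamma\cap\text{Stab}_G(x)$, so the inclusion $\text{Stab}_G(x)\hookrightarrow G$ induces an injection $\text{Stab}_G(x)/(\Gamma\cap\text{Stab}_G(x))\hookrightarrow G/\Gamma$, giving $[\text{Stab}_G(x):\text{Stab}_\Gamma(x)]\le[G:\Gamma]<\infty$. In particular, once the index formula is established, finiteness of $[G:\Gamma]$ will also force the sum to have finitely many terms.

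For the formula itself, I would count the right cosets $\Gamma\backslash G$ by letting $H$ act on them on the right, $(\Gamma g)\cdot h:=\Gamma gh$. This is a well-defined action whose orbits are exactly the double cosets in $\Gamma\backslash G/H$, and the stabilizer in $H$ of the coset $\Gamma g$ is $H\cap g^{-1}\Gamma g$; thus the orbit of $\Gamma g$ has cardinality $[H:H\cap g^{-1}\Gamma g]$. Conjugating by $g$ sends $H\cap g^{-1}\Gamma g$ to $gHg^{-1}\cap\Gamma=\text{Stab}_G(g\cdot x_0)\cap\Gamma=\text{Stab}_\Gamma(g\cdot x_0)$ and sends $H$ to $gHg^{-1}=\text{Stab}_G(g\cdot x_0)$, so this cardinality equals $[\text{Stab}_G(x):\text{Stab}_\Gamma(x)]$ with $x=g\cdot x_0$. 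Summing the orbit sizes over all double cosets and using the bijection $\Gamma\backslash G/H\leftrightarrow\Gamma\backslash S$ from the first paragraph yields
$$[G:\Gamma]=|\Gamma\backslash G|=\sum_{x\in\Gamma\backslash S}[\text{Stab}_G(x):\text{Stab}_\Gamma(x)].$$

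The only point requiring care is the bookkeeping: one must check that $\Gamma gH\mapsto\Gamma\cdot(g\cdot x_0)$ is well defined and bijective, and that the orbit-size computation does not depend on the chosen representative $g$ of a double coset — which holds because replacing $g$ by $\gamma g h$ only conjugates $H\cap g^{-1}\Gamma g$ by an element of $H$, leaving its index in $H$ unchanged. No subtlety arises from infiniteness of $S$ or of the stabilizers, since everything is controlled by $[G:\Gamma]<\infty$ on the left-hand side.
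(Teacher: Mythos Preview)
Your argument is correct. The paper does not give its own proof of this proposition; it simply cites Shimura's book, where the argument is precisely the double-coset count you wrote out: identify $S$ with $G/H$ for $H=\text{Stab}_G(x_0)$, partition $\Gamma\backslash G$ into $H$-orbits for the right action, and recognize each orbit size $[H:H\cap g^{-1}\Gamma g]$ as $[\text{Stab}_G(x):\text{Stab}_\Gamma(x)]$ with $x=g\cdot x_0$. There is nothing to add.
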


Denote by $[0]$ (resp. $[\infty]$) the set of $\Gamma-$inequivalent cusps of $\Gamma$ that are $\overline{\Gamma}_0(2)$-equivalent to $0$ (resp. $\infty$) and by $v_0$ (resp. $v_{\infty}$) the cardinality of $[0]$ (resp. $[\infty]$). Note that we have $v= v_0 + v_\infty$. Additionally, $\Gamma = \ker{\overline{\rho}}$, being a normal subgroup of $\overline{\Gamma}_0(2)$,  acts transitively on each of the sets $[0]$ and $[\infty]$.

 Let $s \in [\infty]$. There exists $\gamma \in \overline{\Gamma}_0(2)$ such that $\gamma \infty = s$. Thus,
$$ \text{Stab}_{\overline{\Gamma}_0(2)} (s) = \gamma \text{Stab}_{\overline{\Gamma}_0(2)} (\infty) \gamma^{-1} = \gamma \left<T\right> \gamma^{-1}.$$

Similarly
$$ 
\text{Stab}_{\Gamma} (s) = \gamma \text{Stab}_{\gamma^{-1}\Gamma \gamma} (\infty) \gamma^{-1} = \gamma \text{Stab}_{\Gamma} (\infty) \gamma^{-1} = \gamma \left<T^{m_1}\right> \gamma^{-1},
$$
where the cusp width $m_1$ is as in \thmref{thm4.4}. Therefore, using the orbit-stabilizer theorem, we have
$$ 
[\overline{\Gamma}_0(2):\Gamma] = \sum_{s \in [\infty]} m_1 = v_\infty m_1.
$$
Similarly, if $s$ is a cusp of $\Gamma$ in $[0]$ and $\gamma s = 0$ for some $\gamma \in \overline{\Gamma}_0(2)$, then
$$ \text{Stab}_{\overline{\Gamma}_0(2)} [0] = S \,\text{Stab}_{S^{-1}\overline{\Gamma}_0(2) S}(\infty) \, S^{-1} = S\left<T^2\right>S^{-1}.$$
We also have  $\text{Stab}_{\Gamma} (0) = S \left<T^{m_2}\right> S^{-1}$, where $m_2$ is the cusp width of $0$ in $\G$. Hence,
\begin{align*}
    [\overline{\Gamma}_0(2):\Gamma] &= \sum_{s \in [0]} [\text{Stab}_{\overline{\Gamma}_0(2)}(s) : \text{Stab}_{\Gamma}(s) ] \\
    &= \sum_{s \in [0]} [\gamma \text{Stab}_{\overline{\Gamma}_0(2)}(0) \gamma^{-1}  : \gamma \text{Stab}_{\Gamma}(0) \gamma^{-1} ] \\
    &= \sum_{s \in [0]} [(\gamma S)\text{Stab}_{S^{-1}\overline{\Gamma}_0(2)S}( \infty) (\gamma S)^{-1}  : (\gamma S) \text{Stab}_{S^{-1} \Gamma S}(\infty) (\gamma S)^{-1} ] \\
    &= \sum_{s \in [0]} [(\gamma S)\left<T^2\right>(\gamma S)^{-1}  : (\gamma S) \left<T^{m_2}\right> (\gamma S)^{-1} ] \\
    &= v_0 \frac{m_2}{2}.
\end{align*}

If we set $\mu' := [\overline{\Gamma}_0(2):\Gamma]$, then $\mu = 3 \mu'$ and 
$$
v = v_0 + v_\infty = \mu' \left( \frac{1}{m_1} + \frac{2}{m_2} \right).
$$
Using these expressions and the Riemann Hurwitz formula \eqref{genus}, we get

\begin{prop}
Let $h$ be a modular solution to equation 5.3 and let $\overline{\rho} : \overline{\Gamma}_0(2) \twoheadrightarrow G$ be its corresponding projective representation, then $h$ is invariant under the modular subgroup $\ker{\overline{\rho}}$ whose genus is given by
    $$ g= 1 + |G| \left(\frac{1}{4} - \frac{1}{2m_1} - \frac{1}{m_2}\right),$$
where $m_1$ and $m_2$ are the respective cusp widths at $\infty$ and at $0$.
\end{prop}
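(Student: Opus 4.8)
The plan is to assemble the genus formula by substituting the two combinatorial quantities $\mu$ and $v$, already computed in the preceding discussion, into the simplified Riemann--Hurwitz formula \eqref{genus}. All the real work has been done in the paragraphs leading up to the statement; the proof is essentially a bookkeeping step, so I would keep it short.

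First I would recall the setup: by \thmref{thm4.4} and the conclusion of \S5 so far, $h$ is a modular function invariant under $\Gamma:=\ker\overline{\rho}$, which by \lemref{forum2} is a torsion-free normal subgroup of $\overline{\Gamma}_0(2)$, hence of finite index once $\overline{\rho}$ has finite image $G$. Because $\Gamma$ is torsion-free, \eqref{genus} applies: $g = 1 + \mu/12 - v/2$, where $\mu = [\overline{\Gamma}(1):\Gamma]$ and $v$ is the number of $\Gamma$-inequivalent cusps. Next I would invoke the two identities established just above the statement: writing $\mu' = [\overline{\Gamma}_0(2):\Gamma]$, one has $\mu = 3\mu'$ (since $[\overline{\Gamma}(1):\overline{\Gamma}_0(2)]=3$), and the orbit--stabilizer computation gave $v_\infty = \mu'/m_1$ and $v_0 = 2\mu'/m_2$, so that $v = v_0 + v_\infty = \mu'\bigl(\tfrac{1}{m_1} + \tfrac{2}{m_2}\bigr)$. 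Finally I would observe that $\mu' = [\overline{\Gamma}_0(2):\ker\overline{\rho}] = |G|$ by the first isomorphism theorem, since $\overline{\rho}$ is surjective onto $G$.

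Then the computation is immediate:
\[
g = 1 + \frac{3\mu'}{12} - \frac{1}{2}\,\mu'\!\left(\frac{1}{m_1} + \frac{2}{m_2}\right) = 1 + \mu'\!\left(\frac{1}{4} - \frac{1}{2m_1} - \frac{1}{m_2}\right),
\]
and substituting $\mu' = |G|$ gives exactly the claimed formula.

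The only point requiring any care — and the nearest thing to an obstacle — is making sure the hypotheses of the torsion-free Riemann--Hurwitz formula \eqref{genus} genuinely hold for $\Gamma = \ker\overline{\rho}$: normality and finite index come from $\overline{\rho}$ being a surjection onto a finite group, and torsion-freeness is precisely \lemref{forum2}, so there is nothing to prove beyond citing these. One should also note that the covering $X(\Gamma)\to X(\Gamma(1))$ is unramified over the elliptic points precisely because $\Gamma$ has no torsion, which is why the $e_2$ and $e_3$ terms drop out; this was already recorded in the passage to \eqref{genus}. With those remarks in place the proof is complete.
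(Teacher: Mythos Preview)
Your proposal is correct and follows exactly the paper's own approach: the proposition is stated immediately after the computations of $\mu = 3\mu'$ and $v = \mu'(1/m_1 + 2/m_2)$, and the paper simply says ``Using these expressions and the Riemann--Hurwitz formula \eqref{genus}, we get'' before stating the result. Your write-up makes explicit the substitution $\mu' = |G|$ and the check that \lemref{forum2} supplies torsion-freeness, which is precisely the bookkeeping the paper leaves implicit.
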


This section can now be summarized in the following table:

\begin{table}[ht]
\centering
\begin{tabular}{>{\centering\arraybackslash}m{3cm} >{\centering\arraybackslash}m{5cm} >{\centering\arraybackslash}m{2cm} >{\centering\arraybackslash}m{1.5cm}}
\toprule
\textbf{G} & \boldmath{$\Gamma$} & \textbf{(m\textsubscript{1}, m\textsubscript{2})} & \textbf{Genus} \\
\midrule
$A_4$ & $\Gamma_0(2) \cap \Gamma(3)$ & $(3,6)$ & $0$ \\
\midrule
\multirow{2}{*}{\rule{0pt}{2.5ex} $S_4$ \rule{0pt}{2.5ex}} & $N_{\Gamma_0(2)}(T^4,R^3)$ & $(4,6)$ & \multirow{2}{*}{$0$} \\
& $N_{\Gamma_0(2)}(T^3,R^4)$ & $(3,8)$ & \\
\midrule
\multirow{2}{*}{\rule{0pt}{2.5ex} $A_5$ \rule{0pt}{2.5ex}} & $N_{\Gamma_0(2)}(T^5,R^3)$ & $(5,6)$ & \multirow{2}{*}{$0$} \\
& $N_{\Gamma_0(2)}(T^3,R^5)$ & $(3,10)$ & \\
\midrule
\multirow{2}{*}{\rule{0pt}{2.5ex} $D_{2n}$ for $n \geq 1$ \rule{0pt}{2.5ex}} & $N_{\Gamma_0(2)}(T^n,R^2)$ & $(n,4)$ & \multirow{2}{*}{$0$} \\
& $N_{\Gamma_0(2)}(T^2,R^n)$ & $(2,2n)$ & \\
\midrule
$C_{2n}$ for $n$ even & $\left< T^{2n}, R^{2n}, T^{n+1}R^{-1}, [T, R]\right> $ & $(2n,4n)$ & $\frac{n}{2}$ \\
\midrule
\multirow{2}{*}{\rule{0pt}{2.5ex} $C_{2n}$ for $n$ odd \rule{0pt}{2.5ex}} &  $\left< T^{n}, R^{2n}, R^{n+1}T^{-1}, [T, R]\right> $& $(n,4n)$ & \multirow{2}{*}{$\frac{n-1}{2}$} \\
&  $\left< T^{2n}, R^n, T^{n+1}R^{-1}, [T, R]\right> $ & $(2n,2n)$ & \\
\bottomrule
\end{tabular}
\label{tab:my_label}
\end{table}

\bigskip

\begin{thm}\label{thm5.6}
A solution to the differential equation
\[
\left\{ h, z \right\} = a \, (\theta_2^8) + b \, (\theta_3 \theta_4)^4
\]
is a modular function for a genus $0$ 
 subgroup of the modular group if and only if
\[
a = 2 \pi^2 \left(\frac{n_1}{m_1}\right)^2 \quad \text{and} \quad b = 2 \pi^2 \left(\frac{n_2}{m_2}\right)^2,
\]
where $n_1, m_1$ (resp. $n_2, m_2$) are relatively prime positive integers, and one of the following holds:
\begin{enumerate}
    \item $a = b$ and $m_1 = m_2 \in \{2, 3, 4, 5\}$, \\
    \item $(m_1, m_2) \in \{(n, 4) \mid n \geq 1\} \cup \{(2, 2n) \mid n \geq 1\}$, \\
    \item $(m_1, m_2) \in \{(3, 6), (4, 6), (3, 8), (5, 6), (3, 10)\}$.\\
\end{enumerate}
\end{thm}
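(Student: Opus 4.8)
The plan is to combine Theorem~\ref{thm5.1}, which pins down $a$ and $b$ to the form $2\pi^2(n_i/m_i)^2$ with $\gcd(n_i,m_i)=1$ once a modular solution exists, with the complete classification of finite-image projective representations of $\overline{\Gamma}_0(2)$ in Theorem~\ref{groups} and the accompanying table of cusp widths $(m_1,m_2)$. The argument is essentially a bookkeeping correspondence: a modular solution $h$ of \eqref{4.3} exists if and only if there is a finite-image $\overline{\rho}:\overline{\Gamma}_0(2)\twoheadrightarrow G$ with torsion-free kernel whose cusp widths at $\infty$ and at $0$ are exactly $m_1$ and $m_2$, and for which the indicial data forced by Theorem~\ref{thm4.2}/Theorem~\ref{thm4.4} is consistent with $a=2\pi^2(n_2/m_2)^2$, $b=2\pi^2(n_1/m_1)^2$ (note the swap of roles of the two cusps, as in Theorem~\ref{thm5.1}). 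One direction is already done: if $h$ is modular then by the converse remark after Theorem~\ref{thm4.4} its projectivization $\overline{\rho}$ has finite image, by Lemma~\ref{forum2} the kernel is torsion-free, so by Theorem~\ref{groups} the pair $(G,\Gamma)$ appears in the table, hence $(m_1,m_2)$ is one of the listed pairs, and by Theorem~\ref{thm5.1} $a,b$ have the stated shape. The genus-$0$ restriction then eliminates the $C_{2n}$ rows, leaving exactly cases (1)--(3). Matching the $D_{2n}$ rows $(n,4)$ and $(2,2n)$ gives item (2); the $A_4,S_4,A_5$ rows give item (3); and the diagonal case $m_1=m_2$ needs separate treatment — this is where $a=b$ enters.

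For the converse (sufficiency), for each listed pair $(m_1,m_2)$ I would exhibit the solution. For items (2) and (3), take $\overline{\rho}$ to be the corresponding surjection from Theorem~\ref{groups} onto $D_{2n}$, $A_4$, $S_4$, or $A_5$; by Theorem~\ref{thm4.4} the associated $\rho$-equivariant function $h$ (which exists and is unique up to $\mathrm{PGL}_2(\BC)$ by Theorem~\ref{thm4.2}) is a modular function on $\ker\overline{\rho}$, a genus-$0$ group by the table, and by the computation in the proof of Theorem~\ref{thm5.1} its Schwarzian is $a\,\theta_2^8+b(\theta_3\theta_4)^4$ with $b=2\pi^2(n_1/m_1)^2$ determined by the cusp width $m_1$ at $\infty$ and $a=2\pi^2(n_2/m_2)^2$ by the cusp width $m_2$ at $0$; the integers $n_1,n_2$ (coprime to $m_1,m_2$ respectively) are exactly the freedom in choosing which primitive character/power the equivariant function realizes at each cusp, so every coprime pair is attained. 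For item (1), the relation $E_4=\theta_2^8+(\theta_3\theta_4)^4$ means that when $a=b$ the equation reads $\{h,\tau\}=a\,E_4$, which is the full-level situation of Theorem~\ref{forum1}: that theorem gives a modular solution precisely for $a=2\pi^2(n/m)^2$ with $2\le m\le 5$, and the invariance group is $\Gamma(m)$, whose cusp widths all equal $m$, so indeed $m_1=m_2=m\in\{2,3,4,5\}$ and these solutions are genus $0$.

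The main obstacle is the bookkeeping at the two cusps and making the necessity argument airtight: one must check that the pair $(m_1,m_2)$ arising from a given modular $h$ cannot be realized by a representation \emph{outside} the table (this is exactly what Theorem~\ref{groups} guarantees, but one should verify the table exhausts all finite-image torsion-free-kernel cases, including the degenerate small $n$ in the $D_{2n}$ and $C_{2n}$ families, e.g.\ $D_2\cong C_2\times C_2$, $D_4$, and the overlap $n=2$ between the two $D_{2n}$ rows), and that in the diagonal case the solution genuinely forces $a=b$ rather than merely permitting it. The subtlety is that $(m_1,m_2)=(m,m)$ with $m\in\{2,3,4,5\}$ could a priori also come from one of the other families; one must argue that when $m_1=m_2$ the only torsion-free normal subgroups of $\overline{\Gamma}_0(2)$ with those cusp widths and genus $0$ are the $\overline{\Gamma}(m)$'s, forcing the Schwarzian to be a multiple of $E_4$ and hence $a=b$. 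A clean way to see this: if $h$ is modular on $\Gamma\le\overline{\Gamma}_0(2)$ with $T^m,\,ST^mS^{-1}\in\Gamma$ and $\Gamma$ normal of genus $0$, then $\Gamma$ contains $\overline{\Gamma}(m)\cap\overline{\Gamma}_0(2)$ and, chasing the index count $\mu'(1/m_1+2/m_2)=v$ against $g=0$, one finds $\mu'=[\overline{\Gamma}_0(2):\Gamma]$ is forced to the value making $\Gamma$ a full principal congruence group, whence $F=\{h,\tau\}$ is $\Gamma(1)$-invariant of weight $4$, i.e.\ a multiple of $E_4$, giving $a=b$.
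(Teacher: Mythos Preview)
Your overall architecture matches the paper: necessity comes from the finite-image classification in \thmref{groups} plus the genus column of the table (eliminating the cyclic rows with $n\ge 2$), and the case $a=b$ reduces to \thmref{forum1} via the identity $E_4=\theta_2^8+(\theta_3\theta_4)^4$. That part is fine.

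Where you diverge from the paper, and where there are genuine gaps, is in the sufficiency direction and in your handling of the diagonal case.

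\textbf{Sufficiency.} You propose to start from a chosen $\overline{\rho}$ (picked out of \thmref{groups}) and then invoke \thmref{thm4.2}/\thmref{thm4.4} to produce the associated $\rho$-equivariant $h$. But those results do not manufacture an equivariant function from a prescribed representation; \thmref{thm4.2} produces a solution of the differential equation (with its own, a priori unknown, $\rho$), and \thmref{thm4.4} describes the $q$-expansion once one already knows $\overline{\rho}$ has finite image. Your claim that ``every coprime pair $(n_1,n_2)$ is attained'' as a free choice at the two cusps is also unjustified. The paper's argument is shorter and avoids all of this: starting from the equation with the given $a,b$, the Frobenius expansion at $\infty$ shows $h(\tau+1)=e^{2\pi i n_1/m_1}h(\tau)$, so $T^{m_1}\in\ker\overline{\rho}$; the same at the cusp $0$ gives $R^{m_2/2}\in\ker\overline{\rho}$. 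Hence $h$ is already modular on $N_{\overline{\Gamma}_0(2)}(T^{m_1},R^{m_2/2})$, and for each listed pair this normal closure is precisely one of the groups in the table, hence finite index and genus $0$. No representation has to be chosen in advance.

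\textbf{The diagonal case.} Your worry that $(m_1,m_2)=(m,m)$ must be forced to imply $a=b$ is misplaced, and your proposed fix fails on its own terms: for $m\in\{3,5\}$ one has $ST^mS^{-1}\notin\Gamma_0(2)$, and $\overline{\Gamma}(m)$ is not even contained in $\overline{\Gamma}_0(2)$, so the containment ``$\Gamma\supseteq\overline{\Gamma}(m)\cap\overline{\Gamma}_0(2)$'' and the index count you sketch do not get off the ground. The paper sidesteps this entirely by splitting on the value of $a-b$: if $a=b$ the equation is $\{h,\tau\}=aE_4$ and \thmref{forum1} applies directly (giving item (1)); if $a\neq b$ the table applied to $\ker\overline{\rho}$ gives $(m_1,m_2)$ in items (2) or (3). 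There is no separate ``$m_1=m_2$'' analysis to perform.
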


\smallskip

\begin{proof}
First, note that if $a=b$,  the invariance subgroup of $h$ in $\overline{\Gamma} (1)$ may be larger than $\ker(\overline{\rho})$. Indeed, in this case,  $h$ becomes $\rho-$equivariant for a representation $\rho$ of $\Gamma(1)$. The case $a=b$  has already been  treated in \thmref{forum1}. This, together with the above table prove  the ``if" part of the theorem.

For the converse, suppose $a \ne b$. Solving
    \begin{equation}\label{5.2}
\left\{ h, z \right\} = 2 \pi^2 \left(\frac{n_1}{m_1}\right)^2 \, (\theta_2^8) + 2 \pi^2 \left(\frac{n_2}{m_2}\right)^2 \, (\theta_3 \theta_4)^4
\end{equation}
with the Frobenious method implies that $h$ is a linear fractional transformation of an expression of the form 
$$ q^{\frac{n_1}{m_1}}\sum_{n=0}^\infty a_i q^i.$$
In particular, we observe that $T^{m_1} \in \ker{\overline{\rho}}$. Similarly, by considering $h \circ S$, we deduce that 
$$
R^{\frac{m_2}{2}} = ST^{-m_2}S^{-1}  \in \ker{\overline{\rho}}.
$$
This implies that 
$$ 
N_{\Gamma_0(2)}(T^{m_1},R^{\frac{m_2}{2}}) \subseteq \ker{\overline{\rho}},
$$
and for all the above cited  values of $m_1$ and $m_2$, the group $N_{\Gamma_0(2)}(T^{m_1},R^{\frac{m_2}{2}})$ 
is a finite index genus $0$ subgroup of the modular group.
\end{proof}

Later, when we  examine the modular curve associated to reducible representations, we will remove the genus 
0 assumption imposed in the preceding theorem.

\section{Explicit Solutions: The Genus $0$ Case}

Let $h$ be a modular solution to \eqref{4.3} and let $\rho$ be its corresponding representation. Set $\Gamma = \ker{\overline{\rho}}$. The map $h$ induces a natural covering of Riemann surfaces 
$$ h : X(\Gamma) \to \mathbb{P}_1(\mathbb{C}).$$
The Riemann-Hurwitz formula for this covering is given by
$$ 2g - 2 = -2d + \sum_{P \in \mathbb{P}_1(\mathbb{C})} e_P -1,$$
where $d$ is the degree of the covering, $g$ is the genus of $\Gamma$, and $e_P$ is the ramification index above $P$.

Since $\{h,\tau\}$ is holomorphic on $\mathbb{H}$, we deduce from the definition of the Schwarz derivative, that $h$ takes its finite values only once (as $h'$ does not vanish and can only have  simple poles in $\mathbb{H}$). Consequently, $h$ can  be ramified only at the cusps. Furthermore, by choosing a linear fractional transformation of $h$ that vanishes at $\infty$ (resp. $0$), then $n_1$ (resp. $n_2$) is the order of vanishing. Therefore, every cusp of $\Gamma$ that is $\Gamma_0(2)$-equivalent to $\infty$ (resp. $0$) has ramification index $n_1$ (resp. $n_2$).

The Riemann-Hurwitz formula can then be written as :
\begin{equation}\label{6.1}
    d = 1 - g + \frac{\mu}{2 m_1}(n_1 -1) + \frac{\mu}{m_2}(n_2 -1).
\end{equation} 

\subsection*{The Tetrahedral Case}

The simplest case involves the modular solutions $h$ to \eqref{4.3} whose projective representation has an image isomorphic to $A_4$. In this case, the degree formula \eqref{6.1} simplifies to
\begin{equation}\label{degree}
d = 2 (n_1 + n_2) - 3.
\end{equation}
When $n_1 = n_2 =1$, the solution $h$ is a degree $1$ modular function, that is, a hauptmodul on the group
$$
\Gamma = \overline{\Gamma}_0(2) \cap \overline{\Gamma}(3).
$$  
One such hauptmodul is given by
$$t(\tau)= \frac{\eta(2 \tau) \eta(3 \tau)^3}{\eta(\tau) \eta(6 \tau)^3},$$
and as all hauptmoduln are linear fractions of each other, \thmref{thm5.6} implies:
$$ 
\left\{ t,\tau\right\} =  2 \pi^2 \left(\frac{1}{6}\right)^2 \, (\theta_2)^8 + 2 \pi^2 \left(\frac{1}{3}\right)^2 \, (\theta_3\theta_4)^4.
$$
For  a general equation of the form
$$ \left\{ h,\tau\right\} =  2 \pi^2 \left(\frac{n_1}{6}\right)^2 \, (\theta_2)^8 + 2 \pi^2 \left(\frac{n_2}{3}\right)^2 \, (\theta_3\theta_4)^4,$$
\thmref{thm5.6} implies that any solution will be a modular function over $\Gamma$. Therefore, $h$ must be  a rational function of the hauptmodul $t$, with a degree  given by \eqref{degree}. In what follows, we compute some examples of these rational functions. 

\bigskip

\renewcommand{\arraystretch}{2} 

\begin{center}
\begin{tabularx}{0.7\textwidth} { 
  | >{\centering\arraybackslash}X 
  | >{\centering\arraybackslash}X 
  | }
 \hline
 $(n_1, n_2, d)$ & $h$ \\
 \hline
 (1,2,3)  & $\frac{t}{t^3+t-2}$    \\
\hline
 (1,4,7)  & $\frac{t^7+14t^4-14t}{7t^3-2}$    \\
\hline
 (5,1,9)  & $\frac{t^8+16t^5}{5t^9+240t^6+384t^3-256}$    \\
\hline
 (1,5,9)  & $\frac{2t^4-t}{t^9+12 t^6-60t^3+10}$    \\
\hline
 (5,2,11)  & $\frac{55t^9+528t^6+256}{t^{11}+110 t^8 +352t^5}$    \\
\hline
\end{tabularx}
\end{center}

\subsection*{The Dihedral Case}
In this section, we focus on finding modular solutions $h$ to \eqref{4.3} where the associated projective representation has an image isomorphic to  $D_{2n}$ for some  positive integer $n$. Some of the modular functions that will appear in this section are defined on Fermat curves and were studied in detail by Rohrlich \cite{roh,roh2}.

We begin the case where
$$
\Gamma := \ker{\overline{\rho}} = N_{\overline{\Gamma}_0(2)}(T^2,R^n),
$$
and, in particular, when $n=1$, we have $\G=\overline{\Gamma}(2)$ which has a hauptmodul
$$
\lambda(\tau) = \frac{\theta_2^4}{\theta_3^4} = 16 \, q^{1/2}\prod_{k=1}^\infty \left(\frac{1+q^k}{1+q^{k-\frac{1}{2}}}\right)^8.
$$
Another hauptmodul for this group is
$$ 
1- \lambda(\tau)=\lambda(S\tau) = \frac{\theta_4^4}{\theta_3^4} = \prod_{k=1}^\infty \left(\frac{1-q^{k-\frac{1}{2}}}{1+q^{k-\frac{1}{2}}}\right)^8.
$$
The function  $1-\lambda$ is holomorphic and non-vanishing on $\mathbb{H}$ and at $\infty$. Its logarithm, defined as
$$ 
\log (1-\lambda) := 8 \sum_{n=1}^\infty  \log(1-q^{k-\frac{1}{2}}) - \log(1+q^{k-\frac{1}{2}}),
$$ 
where $\log$ denotes the principal branch of the logarithm, is holomorphic on $\mathbb{H}$ and so is $(1-\lambda)^{\frac{1}{n}} := e^{\frac{1}{n} log(1-\lambda)} $.

On can verify, using the transformation rules of the theta functions that
$$ 
(1-\lambda)^{\frac{1}{n}} \circ T = \frac{1}{(1-\lambda)^{\frac{1}{n}}}.
$$
Next, we compute $(1-\lambda)^{\frac{1}{n}} \circ R$. For this, we rely on a result due to  Rorhlich, which we now state.

\begin{thm}\cite{roh}
    Let $G$ be a finite index subgroup of $\SL$ and, for some fixed $\gamma \in \SL$, let $m$ be the smallest positive integer such that $\gamma T^m \gamma^{-1} \in G$. Suppose $f$ is modular with respect to $G$,  holomorphic, and non-zero on $\mathbb{H}$. Then
    $$ \frac{1}{2 \pi i} \left[ \log\left(f\right) \circ \gamma T^m \gamma^{-1} - \log\left(f\right) \right] = \mbox{\em ord}_{\gamma \infty}f.$$
\end{thm}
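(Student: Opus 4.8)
The plan is to reduce the identity to a computation with the Fourier expansion of $f$ at the cusp $\gamma\infty$. Write $g=\gamma T^m\gamma^{-1}$; this is a parabolic element of $G$ fixing $\gamma\infty$, and since $f$ is a modular function for $G$ it is $g$-invariant, i.e.\ $f\circ g=f$ on $\fH$. As $f$ is holomorphic and non-vanishing on the simply connected domain $\fH$, it admits a holomorphic logarithm there, unique up to an additive constant in $2\pi i\,\BZ$; fix one branch $\log f$. Then $\exp\!\bigl(\log f\circ g-\log f\bigr)=(f\circ g)/f\equiv 1$, so $\log f\circ g-\log f$ is a continuous, $2\pi i\,\BZ$-valued function on the connected set $\fH$, hence equals $2\pi i\,n$ for a single integer $n$. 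The content of the theorem is that $n=\mathrm{ord}_{\gamma\infty}f$, and the plan is to pin $n$ down by working near the cusp.

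Next I would change variables by $\tau=\gamma w$ and set $F(w):=f(\gamma w)$. From $g\in G$ and $f\circ g=f$ one gets $F(w+m)=f(\gamma T^m w)=f(g\gamma w)=f(\gamma w)=F(w)$, so $F$ is $m$-periodic; since $f$ is modular for $G$ it is meromorphic at $\gamma\infty$, so $F$ has an expansion $F(w)=\sum_{k\ge k_0}c_k\,q^k$ in $q=e^{2\pi i w/m}$ with $c_{k_0}\neq 0$, and by definition $k_0=\mathrm{ord}_{\gamma\infty}f$. For $\mathrm{Im}(w)$ large, writing $F(w)=c_{k_0}q^{k_0}\bigl(1+\sum_{k\ge 1}(c_{k_0+k}/c_{k_0})q^{k}\bigr)$ and taking the holomorphic logarithm of the bracketed factor (which tends to $1$) gives
\[
\log F(w)=\log c_{k_0}+k_0\,\frac{2\pi i\,w}{m}+G(q),
\]
with $G$ holomorphic near $q=0$ and $G(0)=0$. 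Here the first and last terms are unchanged under $w\mapsto w+m$ (the last because it is a function of $q$), while the middle term increases by $2\pi i\,k_0$. Hence $\log F(w+m)-\log F(w)=2\pi i\,k_0$ for $\mathrm{Im}(w)$ large.

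Finally I would reconcile the two evaluations of the same increment. Unwinding definitions, $\bigl(\log f\circ g-\log f\bigr)(\gamma w)=\log f\bigl(\gamma(w+m)\bigr)-\log f(\gamma w)=\log F(w+m)-\log F(w)$, once the same branch is used throughout; and any two holomorphic logarithms of $F$ on the connected region $\mathrm{Im}(w)\gg 0$ differ by a constant in $2\pi i\,\BZ$, which cancels in this difference, so the value $2\pi i\,k_0$ obtained from the Fourier expansion agrees with the constant $2\pi i\,n$. Therefore $n=k_0=\mathrm{ord}_{\gamma\infty}f$, and dividing by $2\pi i$ yields the stated formula.

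The routine parts are the change of variables and the Fourier computation; the one place requiring care is the bookkeeping of branches of the logarithm — ensuring that the branch used near $\gamma\infty$ and the globally chosen $\log f$ on $\fH$ yield the same difference $\log f\circ g-\log f$, which holds because on each connected domain involved they differ by a constant in $2\pi i\,\BZ$. The minimality of $m$ plays no role in the argument beyond guaranteeing that $q=e^{2\pi i w/m}$ is the correct local parameter at $\gamma\infty$, so that $k_0$ is genuinely the order of vanishing there.
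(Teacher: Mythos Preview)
The paper does not prove this theorem; it is stated with the citation \cite{roh} and used as a black box in the subsequent computation of $(1-\lambda)^{1/n}\circ R$. So there is no proof in the paper to compare against.

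Your argument is correct. The key points --- existence of a global holomorphic logarithm on the simply connected $\fH$, constancy of $\log f\circ g-\log f$ via the connectedness argument, and identification of that constant through the $q$-expansion at the cusp --- are all sound. The branch reconciliation is handled cleanly: once you set $\log F:=\log f\circ\gamma$, the identity $(\log f\circ g-\log f)(\gamma w)=\log F(w+m)-\log F(w)$ holds on the nose with no ambiguity, and the locally constructed logarithm from the Fourier expansion differs from this $\log F$ by a constant in $2\pi i\,\BZ$ on the half-plane $\mathrm{Im}(w)\gg 0$, which indeed cancels in the difference. Your closing remark about the role of the minimality of $m$ is also accurate.
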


We apply this to $1-\lambda$ which satisfies the hypothesis of the theorem. Setting $\gamma = S$, we observe that $\text{ord}_{0}(1 - \lambda) = 1$. Therefore, we have
\begin{align*}
     \log(1-\lambda) \circ R^{-1} &= \log(1-\lambda) \circ ST^{2}S^{-1} \\
     &= \log(1- \lambda) + 2\pi i.
\end{align*}
This implies that
     $$
     (1-\lambda)^{\frac{1}{n}} \circ R = \xi_n^{-1} (1-\lambda)^{\frac{1}{n}},
     $$

where $\xi_n = e^{\frac{2 \pi i}{n}}$. Hence, $(1-\lambda)^{\frac{1}{n}}$ is invariant under the action of the group $\left< T^2,R^n \right>$. Moreover, it remains invariant under the action of any element of the form $AHA^{-1}$, where $A \in \overline{\Gamma}_0(2)$ and $H \in \left< T^2,R^n \right>$. This proves that $(1-\lambda)^{\frac{1}{n}}$ is a modular function over $N_{\overline{\Gamma}_0(2)} (T^2,R^n)$. The fact that it is a hauptmodul follows from:
$$
\left[\mathbb{C}\left((1-\lambda)^{\frac{1}{n}}\right) : \mathbb{C}(1-\lambda)\right] = n = \left[\overline{\Gamma}(2) : N_{\overline{\Gamma}_0(2)} (T^2,R^n) \right],
$$
which shows that $(1-\lambda)^{\frac{1}{n}}$ has degree $1$ over the modular curve 
$$
X(N_{\overline{\Gamma}_0(2)} (T^2,R^n)).
$$
Next, note that, for a positive integer $a$,
$$ 
(1-\lambda)^{\frac{a}{n}} \circ T = \frac{1}{(1-\lambda)^{\frac{a}{n}}}
$$
and 
$$ 
(1-\lambda)^{\frac{a}{n}} \circ R =  \xi_n^{-a} (1-\lambda)^{\frac{1}{n}}.
$$
It follows that $(1-\lambda)^{\frac{a}{n}}$ is a $\rho$-equivariant function on $\Gamma_0(2)$, where the image of $\rho$ is isomorphic to $D_{2n}$. Moreover, it is meromorphic at the cusps and  its derivative is nowhere vanishing on $\fH$. Therefore,  
$  \{ (1-\lambda)^{\frac{a}{n}}, \tau\}$ is a weight $4$ modular form over $\Gamma_0(2)$. More precisely, one can verify that
$$ 
\{ (1 - \lambda)^{r}, \tau\} = \frac{\pi^2}{2} (\theta_3 \theta_4)^4 + \frac{\pi^2}{2}r^2 (\theta_2^8),
$$
for all $r \in \mathbb{Q}^*$. 

In general, a modular solution to \eqref{4.3} in the dihedral case with $(m_1,m_2)=(n,4)$  is a rational function of $(1-\lambda)^{\frac{1}{n}}$ of degree
$$ d = \frac{n}{2} (n_1 -1) + n_2.$$

\begin{remark}
    As $(1 - \lambda) \circ S = \lambda$, we also have
    $$
    \{ \lambda^{r}, \tau\} = \frac{\pi^2}{2} (\theta_2 \theta_3)^4 + \frac{\pi^2}{2}r^2 (\theta_4^8).
    $$
\end{remark}

We compute a few of these rational functions as examples. In the following table, $t$ represents the hauptmodul $(1-\lambda)^{\frac{1}{n}}$.

\renewcommand{\arraystretch}{2} 
\begin{center}
\renewcommand{\arraystretch}{2} 
\begin{tabularx}{0.9\textwidth} { 
  | >{\centering\arraybackslash}X 
  | >{\centering\arraybackslash}X 
  | }
 \hline
 $(n, n_1, n_2, d)$ & $h(t)$ \\
 \hline
 $(2, 3, 1, 3)$  & $\displaystyle\frac{(t-1)^3}{3t^2 + 1}$ \\
\hline
 $(3, 3, 1, 4)$  & $\displaystyle\frac{(t-1)^3(t+1)}{2t^3 + 1}$ \\
 \hline
 $(n, 1, n_2, n_2), \; \text{gcd}(n, n_2) = 1$  & $t^{n_2}$ \\
\hline
\end{tabularx}
\end{center}
\ 

Now, let us consider the conjugate group $\Gamma = N_{\Gamma_0(2)}(T^n,R^2)$. For $n=1$, it is easy to see that this group is $\Gamma_0(4)$, whose hauptmodul can be taken as $\lambda(2 \tau)$ (since $\G(2)$ and $\G_0(4)$ are conjugate under the matrix $\binom{2\ \ 0}{0\ \ 1}$). Using similar methods to  the previous case, we find that
$$ \lambda(2 \tau)^{\frac{1}{n}} \circ T = \xi_n \lambda(2 \tau) \; \text{ and } \; \lambda(2 \tau)^{\frac{1}{n}} \circ R = \frac{1}{ \lambda(2 \tau)}. $$
Furthermore, for all $r \in \mathbb{Q}^*$, the Schwarz derivative satisfies:
$$ 
\{ \lambda(2 \tau)^{r}, \tau\} = 2\pi^2 r^2 (\theta_3 \theta_4)^4 + \frac{\pi^2}{8} \theta_2^8.
$$
In this case, the degree formula is given by:
$$ d = n_1 + \frac{n}{2}(n_2 -1).$$

\subsection*{The Octahedral Case}

We now consider the modular solutions $h$ to \eqref{4.3} whose projective representation has an image isomorphic to $S_4$. We focus on  the case  $$\Gamma = \ker{\overline{\rho}} = N_{\overline{\Gamma}_0(2)}(T^4,R^3),$$
with the other case being analogous.

The degree formula is given by
$$ d= 3n_1 + 4n_2 -6.$$
Furthermore, $\Gamma$ is not a congruence subgroup of the modular group because a torsion-free genus 0 congruence subgroup of the modular group has a most  index 60 in $\overline{\Gamma}(1)$ (see \cite{pams1}).

To construct an explicit hauptmodul, we start with a solution  $h$ that is $\rho-$equivariant for a representation $\rho$  of $\G_0(2)$.
One can easily verify that the matrices $\disp \binom{i\quad 0}{0\quad1}$ and $\disp \binom{1\  -1}{i\quad\   i}$  generate a group which is isomorphic to $S_4$. We claim that, up to applying a linear fractional transformation to $h$, we can assume that
  \[
  h \circ T = i h\ \  \text{and}\ \  h \circ R = \frac{h-1}{ih+i}.
  \]
 Indeed, by the second part of \propref{klein}, we can assume (up to a fractional linear transformation) that 
  $$
 \overline{\rho} ( \overline{\Gamma}_0(2))=\left\langle \binom{i\quad0}{0\quad 1}\ \, ,\, \binom{1\  -1}{i\quad\  i}\right\rangle.
 $$
Now, the homomorphism that sends $\overline{\rho}(T)$ to $\binom{i\ \ 0}{0\ \ 1}$ and $\overline{\rho}(R)$ to $\binom{1\  -1}{i\ \ \  i}$ is an automorphism of $\overline{\rho} \left( \overline{\Gamma}_0(2) \right) \cong S_4$. Since all automorphisms of $S_4$ are inner, we can  assume that $h$ is $\rho-$equivariant with 
\[\disp \rho(T)= \binom{i\quad 0}{0\quad1}\ \ \text{and}\ \  \disp \rho(R)=\binom{1\  -1}{i\quad\   i}.
\]

As for an explicit construction, $h$ is a hauptmodul for $N_{\overline{\Gamma}_0(2)}\left<T^4,R^3\right>$ which contains $x := (1 - \lambda)^{\frac{1}{3}}$. Moreover
$$
\left[\mathbb{C}(x) : \mathbb{C}(h) \right] = [S_4:D_6] = 4. 
$$
Hence, $x$ is a rational function of degree $4$ in $h$, expressed as
$$ 
x = \frac{a_4h^4 + a_3 h^3 + a_2 h^2 + a_1 h + a_0}{b_4h^4 + b_3 h^3 + b_2 h^2 + b_1 h + b_0},
$$
for some $a_i,b_i \in \mathbb{C}$, with $a_4b_4\neq 0$. These coefficients can be easily determined using the constraints:
\[
x(\infty) = 1\,,\  h(T \tau) = ih \,,\    x(T\tau) = \frac{1}{x}\,,
\]
\[  h(R\tau) = \frac{h-1}{ih+i} \, \text{ and } \, x(R\tau) = \xi_3^{-1} x.
\]
We  obtain
$$ x =  \frac{h^4 + 2 i \sqrt{3} h^2 + 1}{h^4 - 2 i \sqrt{3} h^2 + 1}.$$
Thus, up to a scalar multiple, we find
$$
h_{\pm} = \sqrt{\frac{ \sqrt{3}i(x+1) \pm 2 i \sqrt{1+x+x^2}}{x-1}},
$$
both of which are hauptmoduln for $N_{\Gamma_0(2)}\left<T^4,R^3\right>$. Additionally, one can verify that $h_{-} = 1/h_{+}$.

It is worth noting that the holomorphicity of $h_{\pm}$  on $\mathbb{H}$ follows from  the fact that $\lambda$ never takes the values $0$ or $1$ on the upper half-plane.

Using the same approach, one can construct a hauptmodul for $$ N_{\overline{\Gamma}_0(2)}(T^3,R^4)$$
as an algebraic expression of $\lambda(2 \tau)^{\frac{1}{3}}$.

\begin{remark}
    The same approach does not apply to $G = A_5$ because it is a simple group. This is the only case in this paper where explicit solutions are not provided.
\end{remark}

\subsection*{The Cyclic Case}

We recall some properties of the modular $\lambda-$function and introduce some new modular functions that will be used later on. Recall that $\lambda$ is $\rho-$equivariant on $\Gamma(1)$ as it satisfies \cite{rankin}
\begin{equation}\label{6.3}
    \lambda(\tau + 1) = \frac{\lambda(\tau)}{\lambda(\tau) -1} \; \text{ and } \; \lambda(-1/\tau) = 1 - \lambda(\tau).
\end{equation}
We also define the modular function
\begin{equation}\label{6.4}
    \omega_2(\tau) = 2^{12} q \prod_{n>0}(1+q^n)^{24} = 2^{12} \frac{\Delta(2 \tau)}{\Delta(\tau)},
\end{equation}
where $\Delta=\eta^{24}$ is the classical modular discriminant. It is not difficult to verify the identity
\begin{equation}\label{6.5}
     \omega_2 = 16 \frac{\lambda^2}{\lambda -1}.
\end{equation}
Moreover, the function $\omega_2$ is holomorphic and non-vanishing on $\mathbb{H}$. We define an $n$-th root of $\omega_2$. We first define 
$$ 
\log \omega_2 = 12\log(2) + 2 \pi i \tau + 24 \sum_{n > 0} \log(1+q^n)
$$
and then define
$ \omega_2^{\frac{1}{n}} := e^{\frac{1}{n} \text{log} \, \omega_2 }.$ 

When  $n$ is clear from context, we will denote $x := \omega_2^{-\frac{1}{n}}$. It transforms under $\Gamma_0(2)$ as follows \cite{yang}
\begin{equation}\label{6.6}
    x \circ T = \xi_{n}^{-1} x \; \text{ and } \; x \circ R = \xi_{n}^{-1} x.
\end{equation}

In this section, we  show that, except for $n=1$, the modular differential equation \eqref{4.3} has no modular solutions. Our approach involves classifying all $\rho-$equivariant functions for any $\rho: \overline{\Gamma}_0(2) \to \PG$, where Im$\,(\rho)\cong C_{2n}$ for $n \geq 2$. We begin by analyzing the various $\ker{\overline{\rho}}$ and studying their associated space of functions. 

When $n$ is even, the group $\G$ is given by:
$$ \Gamma = \left< T^{2n}, R^{2n}, T^{n+1}R^{-1}, [T, R]\right>.$$
These groups, studied in detail  in \cite{yang},  are denoted $\Phi_n(2n)$. The corresponding space of modular functions is 
$$ \BC(\Gamma) = \mathbb{C}(x,y),$$
where $x = \omega_2^{-\frac{1}{n}}$ and $y := (\frac{\lambda-2}{\lambda})\omega_2^{\frac{-1}{2n}}$ satisfy the hyperelliptic equation
$$ 
y^2 = x + 64 x^{n+1}.
$$
Under the action of $\Gamma_0(2)$, $y$ transforms as: 
\begin{equation}\label{6.7}
    y \circ T = - \xi_{2n}^{-1} y \; \text{ and } \; y \circ R =  \xi_{2n}^{-1} y.
\end{equation}

\begin{prop}
    Suppose $n > 1$ is an even integer. Then $h$ is a $\rho-$equivariant function whose projective representation has an image isomorphic $C_{2n}$ and a torsion-free kernel if and only if 
    $$ h  = \frac{\lambda - 2}{\lambda} \;\omega_2^{\frac{a}{2n} - \frac{1}{2}} R(\omega_2)$$
    for some rational function $R$ and some positive integer $a$ relatively prime to $2n$.
\end{prop}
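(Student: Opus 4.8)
The plan is to characterize $\rho$-equivariant functions whose projectivization $\overline{\rho}$ has image $C_{2n}$ by first pinning down the kernel and then the precise multiplier system. By \thmref{groups}, part (5), for even $n>1$ the kernel of $\overline{\rho}$ must be the group $\Gamma = \langle T^{2n}, R^{2n}, T^{n+1}R^{-1}, [T,R]\rangle = \Phi_n(2n)$, so any $\rho$-equivariant function $h$ with these properties descends to a \emph{meromorphic} function on $X(\Gamma)$, i.e.\ lies in $\BC(\Gamma) = \BC(x,y)$ with $x = \omega_2^{-1/n}$, $y = \frac{\lambda-2}{\lambda}\omega_2^{-1/2n}$ and $y^2 = x + 64x^{n+1}$. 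The strategy is then: (i) reduce to understanding how $h$ must transform under the two generators $T$ and $R$ of $\overline{\Gamma}_0(2)$ modulo $\Gamma$; (ii) exhibit a particular equivariant function with the right transformation law; (iii) divide $h$ by it to land in the field of genuine modular functions $\BC(\Gamma)$, and finally analyze which elements of $\BC(x,y)$ have the required additional symmetry.

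First I would set up the cyclic quotient. Since $\overline{\Gamma}_0(2)/\Gamma \cong C_{2n}$ is abelian and generated by the images of $T$ and $R$, a $\rho$-equivariant $h$ with projective image $C_{2n}$ satisfies $h\circ T = \zeta_1 h$ and $h\circ R = \zeta_2 h$ for roots of unity $\zeta_1, \zeta_2$ (projectively; the linear fractional action of an abelian group fixing $\infty$ and $0$ after a suitable coordinate choice is by scaling — one should note $h$ can be normalized so $0$ and $\infty$ are the two fixed points of the cyclic action, using that a faithful $C_{2n}$-action on $\PC$ has exactly two fixed points). The surjectivity onto $C_{2n}$ and the relation $\rho(T)^n = \rho(RT^{-1})$ coming from the proof of \thmref{groups}(5) force $\zeta_1 = \xi_{2n}^{a}$ with $\gcd(a,2n)=1$ and $\zeta_2 = \xi_{2n}^{a}$ as well (because $RT^{-1} \in \ker$ projectively would be wrong — rather $\rho(R) = \rho(RT^{-1})\rho(T)$ and $\rho(RT^{-1})$ has order $2$, contributing the sign; here I need to be careful to reconcile this with the stated answer, where the $T$ and $R$ exponents in the ansatz $\omega_2^{a/2n - 1/2}$ combined with the $\frac{\lambda-2}{\lambda}$ factor produce the correct multipliers via \eqref{6.6} and \eqref{6.7}).

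Next I would produce the model equivariant function. From \eqref{6.6}, $x = \omega_2^{-1/n}$ satisfies $x\circ T = x \circ R = \xi_n^{-1}x$, and from \eqref{6.7}, $y = \frac{\lambda-2}{\lambda}\omega_2^{-1/2n}$ satisfies $y\circ T = -\xi_{2n}^{-1}y$, $y\circ R = \xi_{2n}^{-1}y$. Hence $\omega_2^{a/2n - 1/2} = \omega_2^{(a-n)/2n}$ is a power of $\omega_2^{1/2n}$; writing the candidate $h_0 := \frac{\lambda-2}{\lambda}\,\omega_2^{a/2n - 1/2}$ and computing $h_0 \circ T$ and $h_0 \circ R$ directly from the transformation rules for $\lambda$ in \eqref{6.3}, for $\omega_2 = 16\lambda^2/(\lambda-1)$, and using $\log\omega_2 \circ T$, $\log\omega_2 \circ R$ (which follow from Rohrlich's theorem applied to $\omega_2$ exactly as $1-\lambda$ was handled earlier, since $\mathrm{ord}_\infty \omega_2 = 1$), I get $h_0 \circ T = \xi_{2n}^{a} \cdot(\pm)\,h_0$ and $h_0 \circ R = \xi_{2n}^{a}(\pm)\,h_0$ with signs matching the order-$2$ element $RT^{-1}$ landing on the residue class $n$. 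Then for any $h$ with the same multipliers, $h/h_0$ is invariant under $T$ and $R$ hence under all of $\overline{\Gamma}_0(2)$ — wait, it is invariant under the generators modulo $\Gamma$, so $h/h_0$ is $\overline{\Gamma}_0(2)$-invariant only after checking it is genuinely $\Gamma$-invariant and meromorphic at the cusps; in fact $h/h_0 \in \BC(\Gamma)$ forces $h/h_0$ to be invariant under the whole group $\overline{\Gamma}_0(2)$ (since its $T$- and $R$-multipliers are trivial), so $h/h_0$ is a modular function for $\overline{\Gamma}_0(2)$, i.e.\ a rational function of a hauptmodul of $\Gamma_0(2)$, which is a rational function of $\omega_2$ (as $\Gamma_0(2)$ has genus $0$ with hauptmodul expressible through $\omega_2$, or directly through $\lambda$; one must confirm $\omega_2$ — or an appropriate function — generates $\BC(\overline{\Gamma}_0(2))$). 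This yields $h = \frac{\lambda-2}{\lambda}\,\omega_2^{a/2n-1/2}\,R(\omega_2)$, and conversely every such expression is checked to be $\rho$-equivariant with projective image $C_{2n}$ and torsion-free kernel (torsion-freeness being automatic since the kernel is $\Phi_n(2n) = \Gamma$, already known torsion-free, provided $\gcd(a,2n)=1$ so the image is genuinely all of $C_{2n}$).

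The main obstacle I anticipate is the bookkeeping of roots of unity and signs: making precise the claim that $h$ can be normalized so the cyclic action is by scaling with the two cusps as fixed points, and then matching the exponent $a/2n - 1/2$ exactly — in particular verifying that the order-$2$ generator $RT^{-1}$ forces precisely the shift by $1/2$ in the exponent (equivalently, that $h$ must involve an odd power of $\omega_2^{1/2n}$ and hence the factor $\frac{\lambda-2}{\lambda}$, which is $y/x^{1/2}$ up to the $\omega_2$-power, is unavoidable rather than merely convenient). This amounts to showing that $h \cdot \omega_2^{1/2 - a/2n}$ has trivial $T$-multiplier but would have a nontrivial $R$-multiplier unless one includes the factor $\frac{\lambda-2}{\lambda}$, i.e.\ that the "diagonal" function $x$ alone cannot absorb the discrepancy between the $T$- and $R$-transformation laws dictated by $\rho(RT^{-1})$ having order $2$. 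Once that discrepancy is isolated and identified with the known transformation of $y$ under $T$ versus $R$ in \eqref{6.7}, the rest is the routine Galois-theory argument over $\BC(\Gamma) \supseteq \BC(\overline{\Gamma}_0(2))$.
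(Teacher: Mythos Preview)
Your overall strategy is sound and genuinely different from the paper's. The paper does \emph{not} divide by a model equivariant function; instead it uses the hyperelliptic decomposition directly: since $h\in\BC(\Gamma)=\BC(x,y)$ with $y^2=x+64x^{n+1}$, one writes $h=f(x)+yg(x)$ uniquely, then plugs in the relations $h\circ T=\xi_{2n}^{a}h$ and $h\circ R=h\circ T^{\,n+1}=-\xi_{2n}^{a}h$ together with \eqref{6.6}, \eqref{6.7}. Comparing the $T$- and $R$-transforms forces $f=0$, and the remaining equation $g(\xi_n^{-1}x)=-\xi_{2n}^{a+1}g(x)$ is solved by pulling out the monomial $x^{(n-a-1)/2}$, leaving a rational function of $x^n=\omega_2^{-1}$. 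This is a direct ``solve the functional equation inside the hyperelliptic function field'' computation.

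Your route---build $h_0=\frac{\lambda-2}{\lambda}\,\omega_2^{a/2n-1/2}$, check it has the same multiplier system, then observe $h/h_0\in\BC(\Gamma)^{\overline{\Gamma}_0(2)}=\BC(\omega_2)$---is the Galois-descent version of the same fact and is arguably cleaner once the multipliers are pinned down. What it buys you is that the hyperelliptic equation never enters explicitly; what the paper's approach buys is that no verification that $\omega_2$ is a Hauptmodul for $\Gamma_0(2)$ is needed, and the exponent $(n-a-1)/2$ (equivalently your $a/2n-1/2$) falls out of the functional equation rather than having to be guessed in advance.

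Two points to tighten in your write-up. First, your multiplier computation: you state $\zeta_2=\xi_{2n}^{a}$ and then backtrack; the correct relation, exactly as the paper derives it, is $h\circ R=-\xi_{2n}^{a}h$ (since $\overline{\rho}(R)=\overline{\rho}(T)^{n+1}$ and $\xi_{2n}^{n}=-1$). You must get this sign right or $h/h_0$ will not be $R$-invariant. Second, you should state explicitly that $\omega_2$ is a Hauptmodul for $\Gamma_0(2)$ (immediate from \eqref{6.5} and $[\overline{\Gamma}_0(2):\overline{\Gamma}(2)]=2$), and that $h_0\in\BC(\Gamma)$ (since $a$ is odd when $n$ is even, $(a-n+1)/2\in\BZ$ and $h_0=y\cdot x^{-(a-n+1)/2}$), so that the quotient $h/h_0$ genuinely lies in the function field where your fixed-field argument applies.
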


\begin{proof}
    The ``only if" part is immediate using \eqref{6.3} and \eqref{6.6}. We now prove the converse.   First, observe that $$h \circ T = \xi_{2n}^a h, $$
    for some positive integer $a$ relatively prime to $2n$. From the group structure of $\Gamma$, we know that $\overline{\rho} (R) = \overline{\rho} (T)^{n+1}$. Hence 
    \begin{equation}
         h \circ T = \xi_{2n}^a h \  \text{ and } \  h \circ R = h \circ T^{n+1} = -\xi_{2n}^a h.
    \end{equation}
    The modular function $h$ lives in the function field $\mathbb{C}(x,y)$ of a hyperelliptic curve. Since $n > 1$, $h$ can be uniquely expressed as:
$$ h = f(x) + y g(x),$$
where $f$ and $g$ are rational functions.

Using identities (6.6) and (6.8) and the fact that  $h\circ T = \xi_{2n}^a h$, we deduce:
\begin{equation}
    \xi_{2n}^a (f(x) + yg(x))=h \circ T = f(x \circ T)  - \xi_{2n}^{-1} g(x \circ T) y.
\end{equation}
Similarly, from $h \circ R = h \circ T^{n+1} = -\xi_{2n}^a h$, we have:
\begin{equation}
    -\xi_{2n}^a (f(x) + yg(x))=h \circ R = f(x \circ R)  + \xi_{2n}^{-1} g(x \circ R) y.
\end{equation}

Combining these equations yields:
$$
f(x \circ T)  - \xi_{2n}^{-1} g(x \circ T) y = -f(x \circ R)  - \xi_{2n}^{-1} g(x \circ R) y.
$$
Since $x \circ R = x \circ T = \xi_n^{-1} x$, we conclude that $f = 0$ and equation (6.9) becomes
\begin{equation}
    g(\xi_n^{-1} x) = -\xi_{2n}^{a+1}g(x).
\end{equation} 

Now, define $\disp \Tilde{g}= g(x)/x^{\frac{n - a -1}{2}}$. It follows immediately that
$$
\Tilde{g} (\xi_n^{-1} x) = \Tilde{g} (x). 
$$
Thus, $\Tilde{g}(x)$ is a rational function of $x^n$ which we denote by $R(x^n)$. Therefore,
$$
h = y x^{\frac{n - a -1}{2}} R(x^n) = \frac{\lambda - 2}{\lambda} \omega_2^{\frac{a}{2n} - \frac{1}{2}} \Tilde{R}(\omega_2).
$$

\end{proof}

We proceed similarly for $n > 1$ odd. When
$$
\Gamma = \left< T^{2n}, R^{n}, T^{n+1}R^{-1}, [T, R]\right>,
$$
the space of modular functions is given by:
$$ 
\BC(\Gamma) = \mathbb{C}\left(\omega_2^{-\frac{1}{n}},\frac{\lambda-2}{\lambda}\right).
$$
Setting $x = \omega_2^{-\frac{1}{n}}$ and $y = (\frac{\lambda-2}{\lambda})$,  $x$ and $y$ verify the hyperelliptic equation
$$
y^2 = 1 + 64 x^{n}.
$$
We also deduce that a $\rho-$equivariant function, whose projective representation has an image isomorphic to $C_{2n}$,  has to be of the form
    $$ h  = \frac{\lambda - 2}{\lambda} \omega_2^{\frac{a}{2n} - \frac{1}{2}} R(\omega_2)$$
    where $R$ is a rational function and  $a$ is a positive integer relatively prime to $2n$.

Now, consider the case
$$
\Gamma = \left< T^{n}, R^{2n}, R^{n+1}T^{-1}, [T, R]\right>.
$$
Here, the space of modular functions is:
$$  \BC(\Gamma) = \mathbb{C}\left(\omega_2^{-\frac{1}{n}},\frac{\lambda-2}{\lambda} \omega_2^{-\frac{1}{2n}}\right).
$$
In this case, $h$ takes the form:
$$ h  = \frac{\lambda - 2}{\lambda} \omega_2^{\frac{a}{n} - \frac{1}{2}} R(\omega_2),$$
where $a$ and $n$ are relatively prime.

While this offers a broad class of $\rho$-equivariant functions, the next proposition will show that none of these functions can satisfy the modular differential equation \eqref{4.3}. The main limitation is that these functions lack non-vanishing derivatives.

\begin{lem}
    Let $r \in \mathbb{Q} \setminus \frac{1}{2}\mathbb{Z}$. An expression of the form 
    $$ h = \frac{\lambda -2}{\lambda} \omega_2^r \frac{P(\omega_2)}{Q(\omega_2)},$$
where $P$ and $Q$ are polynomials,    cannot solve the modular differential equation \eqref{4.3}.
\end{lem}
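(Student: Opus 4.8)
The plan is to derive a contradiction from the local univalence that any genuine solution must possess. If $h$ as in the statement solved the equation, then $\{h,\tau\}=a\,\theta_2^8+b\,(\theta_3\theta_4)^4$ would be holomorphic on $\fH$, and by the definition \eqref{schwarz} of the Schwarzian (cf. the discussion in Section~3) $h$ would then be everywhere locally univalent on $\fH$. I will contradict this by producing a point $\tau_0\in\fH$ at which $h$ is holomorphic and nonzero but $h'(\tau_0)=0$.

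The first step is to change the uniformizing variable from $\tau$ to $y:=\frac{\lambda-2}{\lambda}=1-\frac{2}{\lambda}$. Then $\lambda=\frac{2}{1-y}$, and since $\lambda$ maps $\fH$ onto $\BC\setminus\{0,1\}$ as a surjective local biholomorphism (because $\G(2)$ is torsion-free), $y$ maps $\fH$ onto $\BC\setminus\{-1,1\}$ as a surjective local biholomorphism. From \eqref{6.5} one computes $\omega_2=c_0/(1-y^2)$ for a nonzero constant $c_0$, so $\omega_2$ is a rational function of $y$; hence $M(y):=y\cdot\frac{P(\omega_2)}{Q(\omega_2)}$ is a rational function of $y$, nonzero because $P\not\equiv0$, and as a single-valued holomorphic function on $\fH$,
\[
   h \;=\; \omega_2^{\,r}\,M(y),\qquad \omega_2^{\,r}:=e^{r\log\omega_2},
\]
where $\omega_2^{\,r}$ is holomorphic and nowhere vanishing on $\fH$. (In particular $h$ is nonconstant, since $\omega_2^{\,r}$ is not a rational function of $y$ when $r\notin\BZ$.)

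Next I would analyze the logarithmic derivative, which — unlike $h$ itself — is a genuine single-valued rational function of $y$:
\[
   \psi(y)\;:=\;\frac{1}{h}\frac{dh}{dy}\;=\;r\,\frac{d}{dy}\log\omega_2+\frac{M'(y)}{M(y)}\;=\;\frac{-2r\,y}{y^2-1}+\frac{M'(y)}{M(y)}.
\]
It has only simple poles: the term $\frac{-2ry}{y^2-1}=\frac{-r}{y-1}+\frac{-r}{y+1}$ places poles at $y=\pm1$, and $\frac{M'}{M}$ places simple poles with integer residues at the zeros and poles of $M$. Because $r\notin\BZ$, the residue of $\psi$ at each of $y=\pm1$ has the form $(\text{integer})-r\neq0$, so $y=\pm1$ are genuine poles and $\psi$ has $s\geq2$ distinct finite poles $\beta_1,\dots,\beta_s$; moreover the sum of the residues of $\psi$ over these finite poles equals $-2r+(\text{integer})$, which is nonzero because $r\notin\frac12\BZ$. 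Writing $\psi=W(y)/D(y)$ in lowest terms with $D(y)=\prod_{j=1}^{s}(y-\beta_j)$, the coefficient of $y^{s-1}$ in $W$ is exactly that residue sum, so $\deg W=s-1\geq1$; hence $W$ has a root $y_0$, and since $\gcd(W,D)=1$ the point $y_0$ is not a pole of $\psi$, so $y_0\neq\pm1$ and $y_0$ is neither a zero nor a pole of $M$. Therefore $h=\omega_2^{\,r}M(y)$ is holomorphic and nonzero near $y_0$, and $\frac{dh}{dy}(y_0)=h(y_0)\,\psi(y_0)=0$. Picking $\tau_0\in\fH$ with $y(\tau_0)=y_0$ (possible by surjectivity), the chain rule gives $h'(\tau_0)=\frac{dh}{dy}(y_0)\,y'(\tau_0)=0$ with $h(\tau_0)$ finite and nonzero, so $h$ is not locally univalent at $\tau_0$; thus $\{h,\tau\}$ has a double pole at $\tau_0$, contradicting \eqref{4.3}.

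The main thing to be careful about is the degree count for $W$: one must confirm that, after combining everything over a common denominator, the residue sum really does not vanish, so that $\deg W=s-1\geq1$ and $\psi$ genuinely has a zero away from $y=\pm1$ and from the zeros and poles of $M$. This is exactly where the hypothesis $r\notin\frac12\BZ$ enters. Everything else is routine partial-fraction bookkeeping; note in particular that even if $M$ vanishes or has a pole at $y=\pm1$, the residue of $\psi$ there is still $(\text{nonzero integer})-r\neq0$, so the count is unaffected.
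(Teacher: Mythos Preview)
Your argument is correct and takes a genuinely different route from the paper's own proof.

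The paper differentiates $h$ directly as a function of $\lambda$, obtaining an explicit rational expression for $h'$, and then runs a case analysis (according to whether $Q$ vanishes at $\lambda=2$) together with the transformation $h(\tau+1)=\xi_{2n}^a\,h(\tau)$ to pin down the exact shape of $h'$; the contradiction is then extracted by a degree count in $\lambda$, where the hypothesis $r\notin\BZ$ guarantees the top-degree term does not cancel. Your approach sidesteps the explicit formula and the case split entirely: after the change of variable $y=(\lambda-2)/\lambda$ you pass to the logarithmic derivative $\psi=h^{-1}\,dh/dy$, which is automatically a rational function of $y$ with only simple poles and $\psi(\infty)=0$, so that $\psi=\sum c_j/(y-\beta_j)$ in partial fractions. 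The hypothesis $r\notin\BZ$ ensures $y=\pm1$ really are poles (hence $s\ge2$), and the hypothesis $r\notin\tfrac12\BZ$ ensures $\sum c_j=-2r+(\text{integer})\ne0$, forcing $\deg W=s-1\ge1$ and yielding a critical point of $h$ off the bad locus. The residue bookkeeping makes the role of $r\notin\tfrac12\BZ$ completely transparent and avoids the computation-heavy parts of the paper's proof; on the other hand, the paper's explicit formula for $h'$ gives slightly more information (e.g.\ the precise rational shape $h'$ would have to take), which your argument does not produce but also does not need.
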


\begin{proof}

    We proceed by contradiction, assuming that $h'$ does not vanish. First, we can assume that $P(0) \ne 0$, $Q(0) \ne 0$ and that $P$ and $Q$ are relatively prime. Differentiating $h$, we obtain
        \begin{equation}\label{6.12}
          \scalebox{1}{$ h' = 16 \omega_2^{r-1} \lambda' \left( \frac{PQ[2(\lambda-1)^2 + r(\lambda-1)(\lambda-2)^2] + 16(P'Q - Q'P)\lambda^2(\lambda-2)^2}{(\lambda - 1)^3Q^2} \right)$}.
    \end{equation}

   Since $h'$ does not vanish,  then, as a rational function of $\lambda$, $P$ can have at most simple zeros at points in $\mathbb{C} - \{ 0,1\}$. Similarly, 
   \begin{equation*}
          \scalebox{1.1}{$\left(\frac{1}{h} \right)' = 16 \omega_2^{-r-1} \lambda' \left( \frac{PQ\lambda^2[2(\lambda-1)^2+r(\lambda-1)(\lambda-2)^2] + 16(P'Q-Q'P)\lambda^4(\lambda-2)^2}{(\lambda -2)^2(\lambda - 1)^3P^2} \right)$},
    \end{equation*}
   implies that $Q$, as a rational function of $\lambda$, can have at most simple zeros at points in $\mathbb{C} \setminus \{ 0,1,2\}$. 
    
{\bf Case 1: $Q(2) \ne 0$:}

   Write \eqref{6.12} as $h'=16 \omega_2^{r-1} \lambda'\,R(\lambda)$, where  $R = (\lambda-1)^\alpha\Tilde{R} (\tau)$ for some integer $\alpha$, and $\Tilde{R}(1)\in \BC^\times$. At $\lambda=0$, we have:
     $$
     R(0) = -\frac{P(0)}{Q(0)} \,(2-4r) \in \mathbb{C}^\times.
     $$
     Thus, $\Tilde{R}$ does not vanish at $\lambda=0$ or $\lambda=1$, and since $h'$ is nowhere vanishing, $\Tilde{R}$ does vanish anywhere in $\BC$.
     
     Now, a pole $a$ of $\Tilde{R}$ is a zero of $Q$, hence it is a pole of $h$ since $a \ne 2$. This implies  $a$ is a double pole for $h'$, and therefore it is a double pole for $\Tilde{R}$. Consequently,
     $$ 
     R = c\frac{(\lambda-1)^{\beta}}{Q(\omega_2)^2},
     $$
     for some integer $\beta \in \mathbb{Z}$ and a constant $c$. It follows  that 
     $$  
     h' = 16 c \omega_2^{r-1} \lambda' \frac{(\lambda-1)^{\beta}}{Q(\omega_2)^2}.
     $$
     Differentiating $h(\tau + 1) = \xi_{2n}^a h(\tau)$, we  obtain
     $$  
     16 \, c \, \xi_n^{r-1} \omega_2^{r-1} \frac{-\lambda'}{(\lambda -1)^2} \frac{(\lambda-1)^{-\beta}}{Q(\omega_2)^2} = 16 \, c \, \omega_2^{r-1} \lambda' \frac{(\lambda-1)^{\beta}}{Q(\omega_2)^2}.
     $$
    From this $-2-\beta=\beta$, so $\beta=-1$,  and 
     $$ 
     R = \frac{c}{(\lambda - 1) Q(\omega_2)^2}.
     $$
Substituting this back into the expression for $R$, we find
     $$ 
     PQ[2(\lambda-1)^2+r(\lambda-1)(\lambda-2)^2] + 16(P'Q-Q'P)\lambda^2(\lambda-2)^2 = c (\lambda - 1)^2.
     $$
     Since $r\notin\BZ$,  the degree in $\lambda$ of the left-hand side is $\text{deg}\,P +\text{deg}\, Q +3$, while the degree of the right-hand side is 2, leading to a contradiction.

     {\bf Case 2: $Q(2) = 0$:}
     
    If $Q$ vanishes at $\lambda=2$, then by \eqref{6.12} and the  non-vanishing of $(1/h)'$, the polynomial $Q$ has a zero of order at most 2 at $\lambda=2$. Noting that
     $$
     \omega_2'(\lambda) = 16 \frac{\lambda (\lambda -2)}{(\lambda-1)^2},
     $$
     we find that all derivatives of $Q$ with respect to $\lambda$ vanish at $2$. Consequently,  $R$ has a pole of order $2$ at $\lambda = 2$, and 
     $$  
     R = c (\lambda - 1)^{\alpha} \frac{(\lambda -2)^2}{Q(\omega_2)^2}.
     $$
    Similarly to  the previous case,  we obtain $\alpha=-1$ leading to a contradiction analogous to  the previous case.
\end{proof}

    \begin{remark}
        We observe that when $n = 1/2$,  there exists a solution
        $$ \left\{ \frac{\lambda -2}{\lambda} \omega_2^{1/2}, \tau \right\} =   2 \pi^2 \left( \theta_3 \theta_4 \right)^4 + 2 \pi^2 \frac{1}{16} \left(\theta_2\right)^8.$$
        This is a consequence of the fact that $\disp \frac{\lambda -2}{\lambda} \omega_2^{1/2}$ is a hauptmodul of $\Gamma_0(4)$.
    \end{remark}


\begin{thebibliography}{aaaa}
\bibitem{ramanujan2} K. Besrour; A. Sebbar. Hypergeometric solutions to Schwarzian equations. Ramanujan J (2024). https://doi.org/10.1007/s11139-024-00930-6
	\bibitem{fr-ma} C. Franc; G. Mason. Hypergeometric series, modular linear differential equations and vector-valued modular forms. Ramanujan J. 41, 233--267 (2016). 	
		\bibitem{ka-ko} M. Kaneko; M. Koike, On modular forms arising from a differential equation of hypergeometric type. Ramanujan J. 7(2003), no. 1--3, 145--164.
		\bibitem{ka-za} M. Kaneko; D. Zagier. Supersingular j-invariants, hypergeometric series, and Atkin's orthogonal polynomials. Computational perspectives on number theory (Chicago, IL, 1995), 97--126, AMS/IP Stud. Adv. Math., 7, Amer. Math. Soc., Providence, RI, 1998.
         \bibitem{klein} F. Klein. Lectures on the Icosahedron and the Solution of the Fifth Degree.  Dover, New York, 1956, revised ed. (Translated into English by George Gavin Morrice.)
        \bibitem{mason} G. Mason. Vector-valued modular forms and linear differential operators. Int. J. Number Theory 3 (2007), no.3, 377--390
        \bibitem{mason2} G. Mason. $2$-dimensional vector-valued modular forms. Ramanujan J. 17 (2008), no. 3, 405–427.
		\bibitem{mathann} J. McKay; A. Sebbar.  Fuchsian groups, automorphic functions and Schwarzians. Math. Ann. 318 (2), (2000) 255--275.
        \bibitem{rankin} R. Rankin. Modular Forms and Functions, Cambridge Univ. Press, Cambridge, 1977.
		 \bibitem{roh} D.E. Rohrlich. Modular Functions and the Fermat Curves , ProQuest LLC, PhD thesis, Yale University, 1976.
        \bibitem{roh2} D.E. Rohrlich, Points at infinity on the Fermat Curves, Invent. Math. 39 (2) (1977) 95-127.
        
        \bibitem{rev}  H. Saber; A. Sebbar. Modular differential equations and algebraic systems
       Rev. R. Acad. Cienc. Exactas Fís. Nat. Ser. A Mat. RACSAM 118 (2024), no. 3, Paper No. 100, 16 pp.
		\bibitem{ramanujan} H. Saber; A. Sebbar. Automorphic Schwarzian equations and integrals of weight 2 forms. Ramanujan J.  57 (2022), no. 2, 551--568. 
		\bibitem{jmaa} H. Saber; A. Sebbar. Equivariant solutions to modular Schwarzian equations, J. Math. Anal. Appl., 508 (2022), no. 2,  Paper No. 125887. 
        \bibitem{forum} H. Saber; A. Sebbar. Automorphic Schwarzian equations.  Forum Math. 32 (2020), no. 6, 1621--1636. 
  \bibitem{pams1} A. Sebbar, Proc. AMS. Vol. 129, No. 9 (Sep., 2001), pp. 2517-2527. 
		\bibitem{vvmf} A. Sebbar; H. Saber. Equivariant functions and vector-valued modular forms. Int. J. Number Theory 10 (2014), no. 4, 949--954.
		\bibitem{shimura} G. Shimura; Introduction to the Arithmetic Theory of Automorphic Functions, Princeton University Press, Princeton, New Jersey, 1971.
    \bibitem{yang} T.Yang , H.Yin . Some non-congruence subgroups and the associated modular curves. Journal of Number Theory. 2015;161:17-48.

	\end{thebibliography}
\end{document}